\newtheorem{thm}{Theorem}[section]
\newtheorem{cor}[thm]{Corollary}
\newtheorem{obs}[thm]{Observation}
\newtheorem{prop}[thm]{Proposition}
\newtheorem{lem}[thm]{Lemma}
\newtheorem{conj}[thm]{Conjecture}
\newcommand{\mc}[1]{\mathcal{#1}}
\begin{document}

\thispagestyle{empty}

\centerline{\Large\bf A Tutte polynomial inequality for lattice path matroids}

\vspace{10mm}

\centerline{Kolja Knauer$^{\small 1}$, Leonardo Mart\'inez-Sandoval$^{\small 2,\small 3}$, Jorge Luis Ram\'irez Alfons\'in$^{\small 3}$}

\medskip
\begin{small}

\centerline{$^{1}$Laboratoire d'Informatique Fondamentale, Aix-Marseille Universit\'e and CNRS,}
\centerline{Facult\'e des Sciences de Luminy, F-13288 Marseille Cedex 9, France}

\centerline{\texttt{kolja.knauer@lif.univ-mrs.fr}}

\medskip

\centerline{$^{2}$Instituto de Matem\'aticas, Universidad Nacional Aut\'onoma de M\'exico at Juriquilla}
\centerline{Quer\'etaro 76230, M\'exico}
\centerline{\texttt{leomtz@im.unam.mx}}

\medskip

\centerline{$^{3}$Institut Montpelliérain Alexander Grothendieck, Universit\'e de Montpellier}
\centerline{Place Eugéne Bataillon, 34095 Montpellier Cedex, France}
\centerline{\texttt{jramirez@um2.fr}}

\end{small}

 \bigskip\bigskip\noindent
 {\bf Abstract.}
Let $M$ be a matroid without loops or coloops and let $T(M;x,y)$ be its Tutte polynomial. In 1999 Merino and Welsh conjectured that $$\max(T(M;2,0), T(M;0,2))\geq T(M;1,1)$$ holds for graphic matroids. Ten years later, Conde and Merino proposed a multiplicative version of the conjecture which implies the original one. In this paper we prove the multiplicative conjecture for the family of lattice path matroids (generalizing earlier results on uniform and Catalan matroids). In order to do this, we introduce and study particular lattice path matroids, called {\em snakes}, used as building bricks to indeed establish a strengthening of the multiplicative conjecture as well as a complete characterization of the cases in which equality holds.

\medskip\noindent
{\bf Keywords:} lattice path matroids, Tutte polynomial, Merino-Welsh conjecture.

\medskip\noindent
{\bf Mathematics Subject Classification:} 05Axx, 05B35, 05C30, 05C31.



\section{Introduction}

An {\em orientation} of a graph $G$ is an assignment of a direction to each edge. An orientation of $G$ is said to be {\em acyclic} if it has no directed cycles and {\em totally cyclic} if each edge belongs to a directed cycle.
Let $\tau(G)$ be the number of spanning trees of $G$. Let $\alpha(G)$ be the number of acyclic orientations of $G$ and $\alpha^\ast(G)$ the number of totally cyclic orientations of $G$. The following conjectures have been raised by Conde and Merino~\cite{Conde2009} and Merino and Welsh~\cite{Merino1999}:

\begin{conj}[Graphic Merino-Welsh conjectures]

\label{conj:GMW}

For any graph $G$ with no bridges and no loops we have:

	\begin{enumerate}
	  \item $\max\left(\alpha(G),\alpha^\ast(G)\right)\geq \tau(G)$.
	  \item  $\alpha(G)+\alpha^\ast(G)\geq 2\cdot \tau(G)$. \hfill (Additive)
	  \item  $\alpha(G)\cdot\alpha^\ast(G)\geq\tau(G)^2$. \hfill (Multiplicative)
	\end{enumerate}
  
\end{conj}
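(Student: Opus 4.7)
The plan is to target statement (3) directly, since (3) implies (2) via the AM--GM inequality $a+b\ge 2\sqrt{ab}$, and (2) trivially implies (1). Writing $\Phi(G)=T(G;2,0)\cdot T(G;0,2)-T(G;1,1)^2$, the goal therefore reduces to proving $\Phi(G)\ge 0$ for every loopless, bridgeless graph $G$.

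The natural strategy is induction on $|E(G)|$ via deletion--contraction. For an edge $e$ that is neither a loop nor a bridge, $T(G;x,y)=T(G\setminus e;x,y)+T(G/e;x,y)$. Write $(a,b,c)=(T(G\setminus e;2,0),\,T(G\setminus e;1,1),\,T(G\setminus e;0,2))$ and let $(a',b',c')$ be the analogous triple for $G/e$, so that $(a+a',\,b+b',\,c+c')$ is the triple for $G$. Under the inductive hypothesis $ac\ge b^2$ and $a'c'\ge b'^2$, the Cauchy--Schwarz-type estimate
\[
(a+a')(c+c')\ge\bigl(\sqrt{ac}+\sqrt{a'c'}\bigr)^2\ge (b+b')^2
\]
would immediately yield $\Phi(G)\ge 0$.

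The main obstacle is that deletion and contraction do not preserve the ``no loops, no bridges'' hypothesis: if $e$ is parallel to another edge then $G/e$ has a loop, and dually if $e$ lies in a $2$-edge cut then $G\setminus e$ has a bridge. In either case the inductive hypothesis fails because one of the relevant evaluations can vanish, and the clean step above collapses. To cope with this, the plan is to first reduce to the $3$-edge-connected simple case, treating series and parallel classes separately (they rescale the three Tutte evaluations in controllable ways), and to verify small base cases such as cycles, bonds, and $K_4$ directly. The inductive step would then rest on finding, in any $3$-edge-connected simple graph, an edge $e$ that lies in neither a triangle nor a $3$-edge cut, so that both $G\setminus e$ and $G/e$ remain in the inductive class.

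I expect the interaction between series--parallel reductions and the three Tutte evaluations to be the crux of the difficulty: the three values do not scale uniformly under these operations, and naive bookkeeping is apt to lose the sharp inequality at each reduction. A fallback approach would be combinatorial rather than algebraic, exploiting the interpretations of $T(G;2,0)$, $T(G;1,1)$, $T(G;0,2)$ as counts of acyclic orientations, spanning trees, and totally cyclic orientations, and attempting a weighted injection from pairs of spanning trees into pairs consisting of an acyclic and a totally cyclic orientation. A successful bijective argument would additionally have the benefit of characterizing the equality cases.
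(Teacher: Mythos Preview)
The statement you are attempting to prove is a \emph{conjecture}; the paper does not prove it in general and indeed it remains open. The paper establishes only the special case of lattice path matroids (Theorem~\ref{thm:MWLPM} and Corollary~\ref{cor:MWLPM}).

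Your Cauchy--Schwarz deletion--contraction step is exactly Lemma~\ref{lem:delcon} of the paper, and the paper uses it in precisely the way you describe. The genuine gap in your plan is the structural claim on which the induction rests: you need, in every $3$-edge-connected simple graph, an edge $e$ lying in no triangle and in no $3$-edge cut, so that both $G\setminus e$ and $G/e$ stay in the inductive class. You give no argument for this, and there is no reason to expect it to hold in general; for instance any edge of $K_4$ lies in a triangle \emph{and} in a $3$-edge cut, and one can build larger $3$-connected examples in which every edge is bad for the same reason. Without such an edge the Cauchy--Schwarz step cannot be invoked, because one of the two minors has a loop or a bridge and the corresponding product $ac$ or $a'c'$ vanishes. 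This is precisely why the paper restricts to LPMs: Proposition~\ref{prop:LPdown} is the LPM-specific structural lemma guaranteeing that any connected LPM which is not a snake has an element $e$ with both $M\setminus e$ and $M/e$ connected, and the snakes are then handled as base cases via explicit formulas (Propositions~\ref{prop:prodsnakes} and~\ref{prop:bsnakes}). Your series--parallel reduction sketch does not supply an analogue of this lemma for general graphs, and the fallback bijective approach is not developed enough to assess.
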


 Conjecture~\ref{conj:GMW}.3 is the strongest version. It is easy to verify that it implies Conjecture~\ref{conj:GMW}.2, which in turn implies Conjecture~\ref{conj:GMW}.1. Nevertheless, the multiplicative version turns out to be the most manageable. There are partial results concerning these conjectures. For a graph $G$ on $n$ vertices Thomassen~\cite{Thomassen2010} showed that $\tau(G)\le \alpha(G)$ if $G$ has at most $\frac{16}{15}n$ edges or $G$ has maximum degree at most $3$ and $\tau(G)\le \alpha^*(G)$ if $G$ has at least $4n$ edges or $G$ is a planar triangulation. Thus, establishing Conjecture~\ref{conj:GMW}.1 in these cases. Ch\'avez-Lomel\'i et al.~\cite{ChavezL2011} proved Conjecture~\ref{conj:GMW}.1 for several families of graphs, including wheels and complete graphs. Noble and Royle~\cite{Nob-14} established Conjecture~\ref{conj:GMW}.3 for the class of series-parallel graphs. 

As noticed in~\cite{Conde2009} and~\cite{Merino1999}, Conjecture~\ref{conj:GMW} can be stated in terms of the Tutte polynomial $T(G;x,y)$ of $G$ since

\[
    \tau(G)=T(G;1,1),\quad \alpha(G)=T(G;2,0) \text{ and } \alpha^\ast(G)=T(G;0,2).
\]

We thus have the following natural  generalization to matroids.

\begin{conj}[Matroidal Merino-Welsh conjectures]

\label{conj:MMW}

Let $M$ be a matroid without loops or coloops and let $T(M;x,y)$ be its Tutte polynomial. Then:

	\begin{enumerate}
	  \item $\max\left(T(M;2,0),T(M;0,2)\right)\geq T(M;1,1)$.
	  \item $T(M;2,0)+T(M;0,2)\geq 2\cdot T(M;1,1)$. \hfill (Additive) 
	  \item $T(M;2,0)\cdot T(M;0,2)\geq T(M;1,1)^2$. \hfill (Multiplicative)
	\end{enumerate}
  
\end{conj}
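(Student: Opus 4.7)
My plan is to prove the strongest form, namely the multiplicative inequality (3), since (3) implies both (2) and (1). Abbreviate $a(M) := T(M;2,0)$, $b(M) := T(M;0,2)$, and $c(M) := T(M;1,1)$; the goal is to show $a(M)\,b(M) \geq c(M)^{2}$ for every loopless, coloopless matroid $M$.

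The natural approach is induction on $|E(M)|$ via the deletion--contraction recursion. Since $M$ has no loops or coloops, for any $e \in E(M)$,
\[
T(M;x,y) = T(M/e;x,y) + T(M\setminus e;x,y),
\]
so $a(M)=a(M/e)+a(M\setminus e)$, and likewise for $b$ and $c$. If the inductive hypothesis applies to both minors, then by Cauchy--Schwarz, followed by the hypotheses $\sqrt{a(M')b(M')} \geq c(M')$ for $M' \in \{M/e,\,M\setminus e\}$,
\[
a(M)\,b(M) \;\geq\; \bigl(\sqrt{a(M/e)\,b(M/e)} + \sqrt{a(M\setminus e)\,b(M\setminus e)}\bigr)^{2} \;\geq\; \bigl(c(M/e)+c(M\setminus e)\bigr)^{2} = c(M)^{2}.
\]
The base case $|E(M)|=0$ is trivial, and small matroids such as $U_{1,2}$ attain equality ($2\cdot 2 = 2^{2}$), which is a good sanity check.

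The main obstacle — and the reason this induction alone does not settle the full conjecture — is that the inductive hypothesis can fail to apply to the minors. Contracting $e$ produces a loop in $M/e$ whenever $e$ lies in a two-element circuit, and deleting $e$ produces a coloop in $M\setminus e$ whenever $e$ lies in a two-element cocircuit. One must then factor such elements out via $T(N;x,y) = y\,T(N\setminus \ell;x,y)$ for a loop $\ell$ and $T(N;x,y) = x\,T(N/f;x,y)$ for a coloop $f$, but this rescales $a$, $b$, and $c$ asymmetrically (by factors $2,0,1$ and $0,2,1$ respectively), inserting zeros that destroy the symmetry on which the Cauchy--Schwarz step relies.

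Thus the hardest part of the plan is finding a principled way to handle these exceptional elements. Two strategies I would pursue: (i) select $e$ to avoid two-element circuits and cocircuits whenever possible, and isolate the matroids in which every element is exceptional as separate base cases whose Tutte polynomials can be analyzed directly; or (ii) strengthen the induction to a weighted inequality of the form $a(M)\,b(M) \geq \varphi(M)\,c(M)^{2}$ for a correction factor $\varphi(M)$ that absorbs the $x=2$ and $y=2$ contributions produced by forced loops and coloops in minors. For a general matroid no known choice of $\varphi$ or of distinguished $e$ carries the induction through, which is why the full Merino--Welsh conjecture remains open; the rest of the paper instead exploits the combinatorial structure of lattice path matroids (via their "snake" building blocks) to realize strategy (i) in that restricted setting.
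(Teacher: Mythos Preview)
The statement you were asked to prove is Conjecture~\ref{conj:MMW}, an \emph{open conjecture}; the paper does not prove it in general and offers no proof to compare against. Your write-up correctly recognizes this: you set up the natural deletion--contraction induction, derive the Cauchy--Schwarz step, identify precisely where it breaks (minors acquiring loops/coloops when $e$ lies in a two-element circuit or cocircuit), and conclude that the full conjecture remains open. That analysis is sound.

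Moreover, the pieces you isolate are exactly those the paper uses in the restricted setting. Your Cauchy--Schwarz lemma is the paper's Lemma~\ref{lem:delcon} (stated there with the sharpening factor $4/3$), and your strategy~(i)---choose $e$ so that both minors stay loopless/coloopless, and treat as base cases the matroids where no such $e$ exists---is precisely the paper's Proposition~\ref{prop:LPdown} (the choice of $e$ in a connected LPM that is not a snake) together with Proposition~\ref{prop:WMsnakes} (the direct verification for snakes). Your strategy~(ii), a correction factor $\varphi(M)$, also appears implicitly: the paper carries the constant $4/3$ through the induction so that trivial-snake summands, which only give equality in Conjecture~\ref{conj:MMW}.3, do not obstruct the argument. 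So while you have not produced a proof of the conjecture (nor has anyone), your diagnosis of the obstacle and your sketch of the paper's workaround for LPMs are accurate.
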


Notice that not allowing loops and coloops in $M$ is a fundamental hypothesis for the multiplicative version since a loop would imply $T(M;2,0)=0$ and a coloop would imply $T(M;0,2)=0$. 
\smallskip

An important result related to the multiplicative Merino-Welsh conjecture due to Jackson~\cite{Jackson2010} is that $T(M;b,0) \cdot T(M;0,b)\ge T(M;a,a)^2$ for any loopless, coloopless matroid $M$ provided that $b\ge a(a+2)$.
Conjecture~\ref{conj:MMW}.1 for {\em paving} matroids, {\em Catalan} matroids, and whirls is proved in~\cite{ChavezL2011}.
By combining the results from~\cite{ChavezL2011} and~\cite{Jackson2010} it can be proved inductively that paving matroids even satisfy Conjecture~\ref{conj:MMW}.3 (the base cases need a detailed treatment).
\smallskip

The main contribution of the present paper is to prove Conjecture~\ref{conj:MMW}.3 for the class of {\em lattice path} matroids (which contains, in particular, the families of {\em Catalan} matroids and {\em uniform} matroids).

\begin{thm}
    \label{thm:MWLPM}
    Let $M$ be a lattice path matroid without loops or coloops that is not a direct sum of trivial snakes. Then,
    
    \[
      T(M;2,0)\cdot T(M;0,2)\geq \frac{4}{3} \cdot T(M;1,1)^2.
    \]
    
\end{thm}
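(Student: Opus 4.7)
The plan is to use \emph{snakes} as the atomic building blocks and exploit the multiplicativity of the Tutte polynomial under direct sum. Writing $\rho(M) = T(M;2,0)\,T(M;0,2)/T(M;1,1)^2$, one has $\rho(M_1\oplus M_2)=\rho(M_1)\,\rho(M_2)$, and the class of lattice path matroids (LPMs) is closed under direct summands. Hence once the basic multiplicative inequality $\rho(N)\ge 1$ is known for every connected LPM $N$, it suffices to guarantee that at least one connected summand of $M$ satisfies $\rho(N)\ge 4/3$. The notion of a \emph{trivial snake} should be tailored so that trivial snakes are exactly those connected LPMs for which $\rho(N)=1$ (plausibly the two-element parallel class, whose Tutte polynomial $x+y$ gives $\rho=1$), so that the excluded case ``direct sum of trivial snakes'' corresponds precisely to the equality regime.

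The next step is a structural reduction, showing that every connected LPM either is a snake or can be assembled from smaller snake pieces by an operation compatible with the Tutte polynomial, for instance a $2$-sum or a gluing along a shared corner in the underlying lattice-path diagram. The purpose is to turn the desired inequality on arbitrary LPMs into an inequality on snakes alone, either by factorising the Tutte polynomial through the gluing or by inducting on the number of snake pieces. A deletion--contraction argument on a carefully chosen extremal element (say, at the ``tail'' of a snake) also looks viable, since deletion and contraction of LPMs again yield LPMs, and one can hope that the resulting smaller matroids are themselves snakes or direct sums of snakes.

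The core technical task is then the base case: proving $T(S;2,0)\cdot T(S;0,2)\ge \tfrac{4}{3}\,T(S;1,1)^2$ for every non-trivial snake $S$. I would exploit the staircase structure of snakes to get explicit formulas (or clean recursions) for $T(S;1,1)$, $T(S;2,0)$ and $T(S;0,2)$ in terms of binomial sums counting appropriate lattice paths, and then verify the inequality by induction on the length of the snake. The main obstacle is precisely this base case: the bound $4/3$ appears to be sharp, already attained by the smallest non-trivial snake (whose Tutte polynomial yields $\rho=4/3$ on the nose), so the inductive step has no slack. Designing the recursion so that the $4/3$ margin survives when passing from a snake of length $n$ to one of length $n+1$, and correctly identifying every equality case, will be the most delicate portion of the argument.
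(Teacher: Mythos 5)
Your overall architecture --- prove a sharp $\frac{4}{3}$ bound for snakes via explicit formulas, reduce general connected LPMs to this case, and handle disconnected LPMs by multiplicativity of $\rho$ under direct sums --- is exactly the paper's. The snake base case is indeed the technical heart: the paper realizes snakes as graphic matroids of ``multi-fans'', derives the closed form $T(S;2,0)\cdot T(S;0,2)=4\prod_{i=1}^{n}(2^{a_i}-1)$ together with a Fibonacci-type recursion for $T(S;1,1)$, and runs an induction on $n$ with base cases $S(a)$ and $S(2,a)$; your observation that the smallest non-trivial snake attains $\frac{4}{3}$ exactly is correct and is precisely what fixes the constant.

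The genuine gap is in your reduction step. A gluing or $2$-sum decomposition into snake pieces does not factor the Tutte polynomial, and your hope that deletion and contraction of a well-chosen element yield snakes or direct sums of snakes is false in general. What actually works, and what the paper proves, is weaker but sufficient: if a connected LPM $M$ is not a snake, its diagram has an interior lattice point, and for the element $e$ corresponding to the highest and rightmost such point both $M\setminus e$ and $M/e$ are again \emph{connected} LPMs, different from the trivial snake (connectivity of $M/e$ because $e$ lies in at least two sets of the transversal presentation, and of $M\setminus e$ by duality). One then inducts on the number of elements over all connected LPMs, with snakes as the base of the induction rather than as the pieces of a decomposition. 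The ingredient you omit, and which resolves your ``no slack'' worry, is the Cauchy--Schwarz lemma: if $pq\ge \frac{4}{3}r^2$ and $st\ge \frac{4}{3}u^2$, then
\[
(p+s)(q+t)\;\ge\;\left(\sqrt{pq}+\sqrt{st}\right)^2\;\ge\;\tfrac{4}{3}(r+u)^2,
\]
applied to $T(M)=T(M\setminus e)+T(M/e)$. With this lemma any multiplicative constant carries through deletion--contraction for free, so the sharpness of $\frac{4}{3}$ only ever has to be fought for in the snake base case; without it, the inductive step as you describe it does not close.
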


Here, a {\em trivial snake} is a pair of parallel elements. We will see that direct sums of trivial snakes do not satisfy the inequality with the factor $\frac{4}{3}$ but they do satisfy Conjecture~\ref{conj:MMW}.3.

Our theorem is an improvement by a multiplicative constant, and thus it directly implies the multiplicative version of Conjecture~\ref{conj:MMW}. Furthermore, it enables us to characterize the lattice path matroids in which Conjecture~\ref{conj:MMW}.3. holds with equality as precisely being the direct sums of trivial snakes (Corollary~\ref{cor:MWLPM}).

In Section~\ref{sec:basic} we state some basic definitions and properties in matroid theory needed for the rest of the paper. Afterwards, in Section~\ref{sec:lpmsnakes}, we introduce lattice path matroids. We define snakes (which are matroids that can be thought of as ``thin'' lattice path matroids) and prove that they are graphic matroids. We provide explicit formulas for the number of bases and acyclic orientations snakes, which are crucial ingredients for the proof of our main result (Theorem~\ref{thm:MWLPM}), that will be given in Section~\ref{sec:mwconj}.


\section{Basic definitions and properties}
\label{sec:basic}


There are several ways to define a matroid. We refer to~\cite{Welsh1976} for a thorough introduction into the topic. In this paper, we will define matroids in terms of their bases. A \emph{matroid} is a pair $M=(E,\mathcal{B})$ consisting of a finite ground set $E$ and a collection $\mc{B}$ of subsets of $E$ which satisfies:

\begin{itemize}
	\item[(B0)] $\mathcal{B}$ is non-empty.
	\item[(B1)] If $A$ and $B$ are in $\mc{B}$ and there is an element $a\in A\setminus B$, then there exists an element $b\in B\setminus A$ such that $A\setminus \{a\}\cup \{b\}$ is in $\mc{B}$.
\end{itemize}

The elements of $\mc{B}$ are called \emph{bases}. 
If $E=\emptyset$, then $M$ is called \emph{empty}.

It is a basic fact in matroid theory that all the bases of a matroid $M$ have the same cardinality. This number is the \textit{rank} of the matroid. If an element $a\in E$ belongs to no base, it is called a \emph{loop}. If $a$ belongs to every base, it is called a \emph{coloop}. If a matroid has no loops and no coloops we will call it \textit{loopless-coloopless}, which we will abbreviate by \textit{LC}.

If $M$ is a matroid with base set $\mc{B}$ and ground set $E$ then one constructs another matroid $M^\ast$ called the \emph{dual of $M$} with the same ground set but with base set
\[
	\mc{B}^\ast:=\{E\setminus B: B\in \mc{B}\}.
\]

%
%
%
%
	
If $M$ and $N$ are matroids with disjoint ground sets $E$ and $F$, respectively, then the \textit{direct sum} of $M$ and $N$ is the matroid whose ground set is the union of $E$ and $F$, and whose bases are those sets which can be written as the union of a base of $M$ and a base of $N$. If a matroid cannot be expressed as the direct sum of two non-empty matroids it is said to be \emph{connected}, otherwise it is \emph{disconnected}. Note that every connected matroid is \textit{LC}, except for matroids whose ground set consists of a single element.
	
Let $M$ be a matroid with base set $\mathcal{B}$ and ground set $E$ and let $S\subseteq E$. The inclusion-maximal sets of $\{B\setminus S: B\in \mc{B}\}$ are the bases of a new matroid $M\setminus S$ called the \emph{deletion} of $S$. The dual construction is the \textit{contraction} of $S$. It can be defined as the matroid $M/S:=(M^\ast \setminus S)^\ast$. If $S=\{s\}$, then we abbreviate the notations $M\setminus \{s\}$ and $M/\{s\}$ by $M\setminus s$ and $M/s$, respectively. The deletion allows us to extend the notion of rank to subsets of the ground set: For a subset $A\subseteq E$ we denote the \emph{rank of $A$} by $r(A)$ which is defined as the rank of $M\setminus(E\setminus A)$. Note that $r(E)$ is thus the rank of $M$.
	
	A very useful algebraic invariant for matroids is the \textit{Tutte polynomial}. Given a matroid $M$, this is a two-variable polynomial defined as follows:
	
	\[
	  T(M;x,y)=\sum_{A\subseteq E} (x-1)^{r(E)-r(A)}(y-1)^{|A|-r(A)}.
	\]
	
	The Tutte polynomial contains important information about the matroid. Most importantly in our context, $T(M;1,1)$ is the number of bases of $M$. In the case of orientable and in particular graphic matroids, $T(M;2,0)$ and $T(M;0,2)$ count the number of acyclic and totally cyclic orientations of the underlying graph, respectively. It is also well known that the Tutte polynomial satisfies the following recursive property~\cite{Welsh1999}:
$$T(M;x,y)=\left\{\begin{array}{ll}
T(M\setminus s;x,y) + T(M/s; x,y) & \text{ if  } s \text{ is neither a loop}\\
&\text{ \hspace*{.6cm} nor a coloop,}\\
xT(M\setminus s;x,y) & \text{ if } s\text{ is a coloop,}\\
yT(M/s;x,y) & \text{ if } s \text{ is a loop}.
	\end{array}\right.$$

We point out that when $s$ is a loop or a coloop, $M\setminus s=M/s$. 
Furthermore, we will make use of the facts that $T(M;x,y)=T(M^*;y,x)$ and that if $L$ is the direct sum of $M$ and $N$, then $T(L;x,y)=T(M;x,y)\cdot T(N;x,y)$.


\section{Lattice path matroids and snakes}
\label{sec:lpmsnakes}

In this section we address the class of lattice path matroids first introduced by Bonin, de Mier, and Noy~\cite{Bon-03}. Many different aspects of lattice path matroids have been studied:  excluded minor results~\cite{Bon-10}, algebraic geometry notions~\cite{Del-12,Sch-10,Sch-11}, complexity of computing the Tutte polynomial~\cite{Bon-07,Mor-13}, and results around the matroid base polytope~\cite{Cha-11}.
\smallskip

In order to define lattice path matroids, we first introduce transversal matroids.
Let ${\mathcal A}=(A_j : j\in J)$ be a set system, that is, a multiset of subsets of a finite set $E$. A {\em transversal} of $\mathcal A$ is a set $\{x_j : j\in J\}\subseteq E$ of $|J|$ distinct elements such that
$x_j\in A_j$ for all $j$. 
A fundamental result of Edmonds and Fulkerson~\cite{Edmonds-65} states that the set of transversals of a system ${\mathcal A}=(A_j : j\in J)$ constitutes the base set of a matroid on $E$. (Note that the set of transversals could be empty, but this does not happen for the matroids considered in the paper.) The collection $\mathcal A$ is a {\em presentation} of this matroid and any matroid that arises in such a way from a set system is a \emph{transversal matroid}. 
\smallskip

A path in the plane is a {\em lattice path}, if it starts at the origin and only does steps of the form $+(1,0)$ and $+(0,1)$, called {\em North} ($N$) and {\em East} ($E$), respectively. One way to encode a lattice path $P$ is therefore to simply identify it with a sequence $P=(p_1,\dots ,p_{r+m})$, where $p_i\in\{N,E\}$ for all $1\leq i\leq r+m$. Let $\{p_{s_1},\dots ,p_{s_r}\}$ be the set of North steps of $P$ with $s_1<\cdots <s_r$. Clearly, if the total number of steps of $P$ is known, we can recover $P$ from $\{s_1, \ldots, s_{r}\}$, by setting $p_i=N$ if $i\in\{s_1, \ldots, s_{r}\}$ and $p_i=E$, otherwise, for all $1\leq i\leq r+m$. Let $P=\{s_1, \ldots, s_{r}\}$ and $Q=\{t_1,\dots ,t_r\}$ be two lattice paths encoded that way, both ending at the point $(m,r)$, such that $P$ never goes above $Q$. 
%
%
%
The latter condition is equivalent to $t_i\le s_i$ for all $1\le i\le r$.
The {\em lattice path matroid (LPM)} associated to $P$ and $Q$ is the transversal 
matroid $M[P,Q]$ on the ground set $\{1,\dots ,m+r\}$ and presentation
$(A_i : i\in \{1,\dots ,r\})$ where $A_i$ denotes the interval of integers between $t_i$ and $s_i$.
In~\cite[Theorem 3.3]{Bon-03} it was proved that a subset $B$ of $\{1,\dots ,m+r\}$ with $|B|=r$ is a base of $M[P,Q]$ if and only if the associated path, which we will also denote by $B$, stays in the region bounded by $P$ and $Q$, see Figure~\ref{fig:snake-ex}.  We call the part of the plane enclosed by $P$ and $Q$, the \emph{diagram of $M[P,Q]$}. 

\begin{figure}[ht] 
\centering
 \includegraphics[width=1\textwidth]{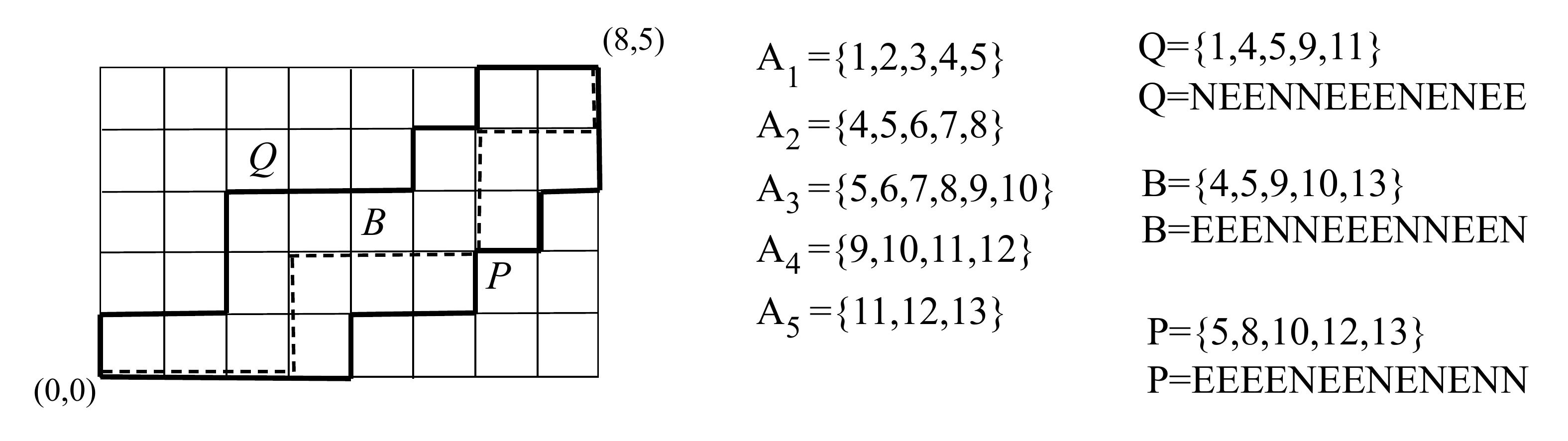}
 \caption{Left: Lattice paths $P$ and $Q$ from $(0, 0)$ to $(8, 5)$ and a path $B$ staying between $P$ and $Q$ in the diagram of $M[P,Q]$. Middle: The set system $A_1,\dots ,A_5$ representing $M[P,Q]$. Right: 
 Representations of $P$, $Q$, and $B$ as subsets of $\{1,\ldots, 13\}$ and as words in the alphabet $\{E,N\}$. }
 \label{fig:snake-ex}
\end{figure} 

The {\em uniform matroid} $U_{r,r+n}$ is the LPM $M[P,Q]$ with $Q =\{1, \dots, r\} = \underbrace{N\cdots N}_{r}\underbrace{E\cdots E}_{n}$ and
$P =\{n+1,n+ 2, \dots, n+r\} = \underbrace{E\cdots E}_{n}\underbrace{N\cdots N}_{r}$.
The {\em $k$-Catalan} matroid is the LPM $M[P,Q]$ with $Q =\{1, 3, \dots, 2k-1\} = \underbrace{NENE\cdots NE}_{k -pairs}$ and $P =\{k+1, k+2,\cdots , 2k\} =\underbrace{E\cdots E}_{k}\underbrace{N\cdots N}_{k}$.  
\smallskip

It is known~\cite[Theorem 3.4]{Bon-03} that the class of LPMs is closed under matroid duality. Indeed, for an LPM $M$, the bases of the dual matroid $M^*$ correspond to the East steps of the lattice paths in the diagram of $M$. Thus, reflecting the diagram of $M$ along the diagonal $x=y$, yields a diagram for $M^*$ and shows that $M^*$ is an LPM, as well. See Figure~\ref{fig:duallpm}. Furthermore, the class of LPMs is closed under duality~\cite{Bon-03}.

 \begin{figure}[ht] 
\centering
 \includegraphics[width=.5\textwidth]{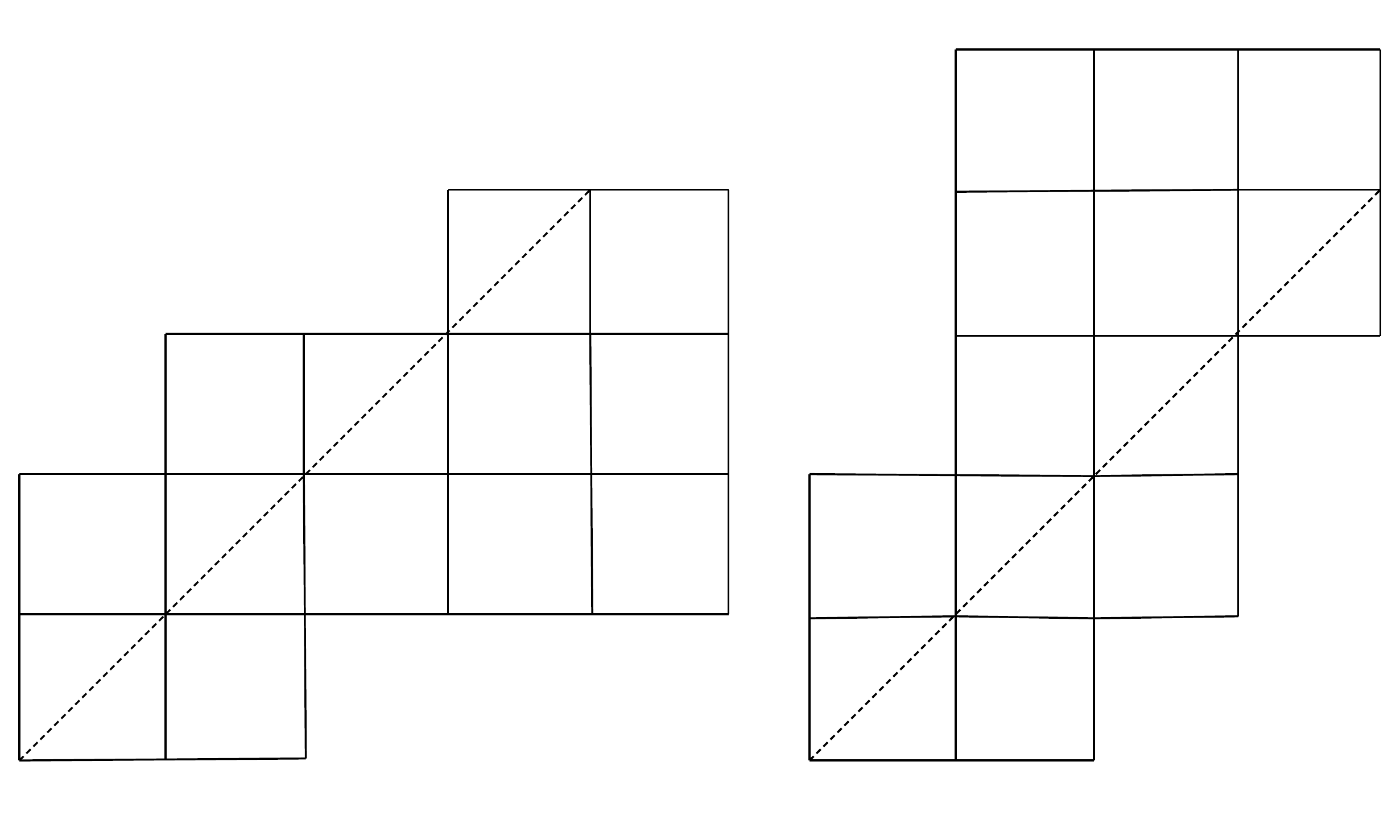}
 \caption{Presentations of an LPM and its dual.}
 \label{fig:duallpm}
\end{figure} 

The direct sum of LPMs in terms of their diagrams is illustrated in Figure~\ref{fig:directsumlpm}. In particular, we shall later use the fact (\cite[Theorem 3.6]{Bon-03}) that the LPM $M[P,Q]$ is connected if and only if the paths $P$ and $Q$ intersect only at $(0,0)$ and $(m,r)$. Moreover, we can detect loops and coloops in the diagram the following way. If $P$ and $Q$ share a horizontal (respectively vertical) edge at step $e$, then $e$ is a loop (respectively a coloop). Therefore, LC LPMs are those in which $P$ and $Q$ do not share vertical or horizontal edges.

 \begin{figure}[ht] 
\centering
 \includegraphics[width=.7\textwidth]{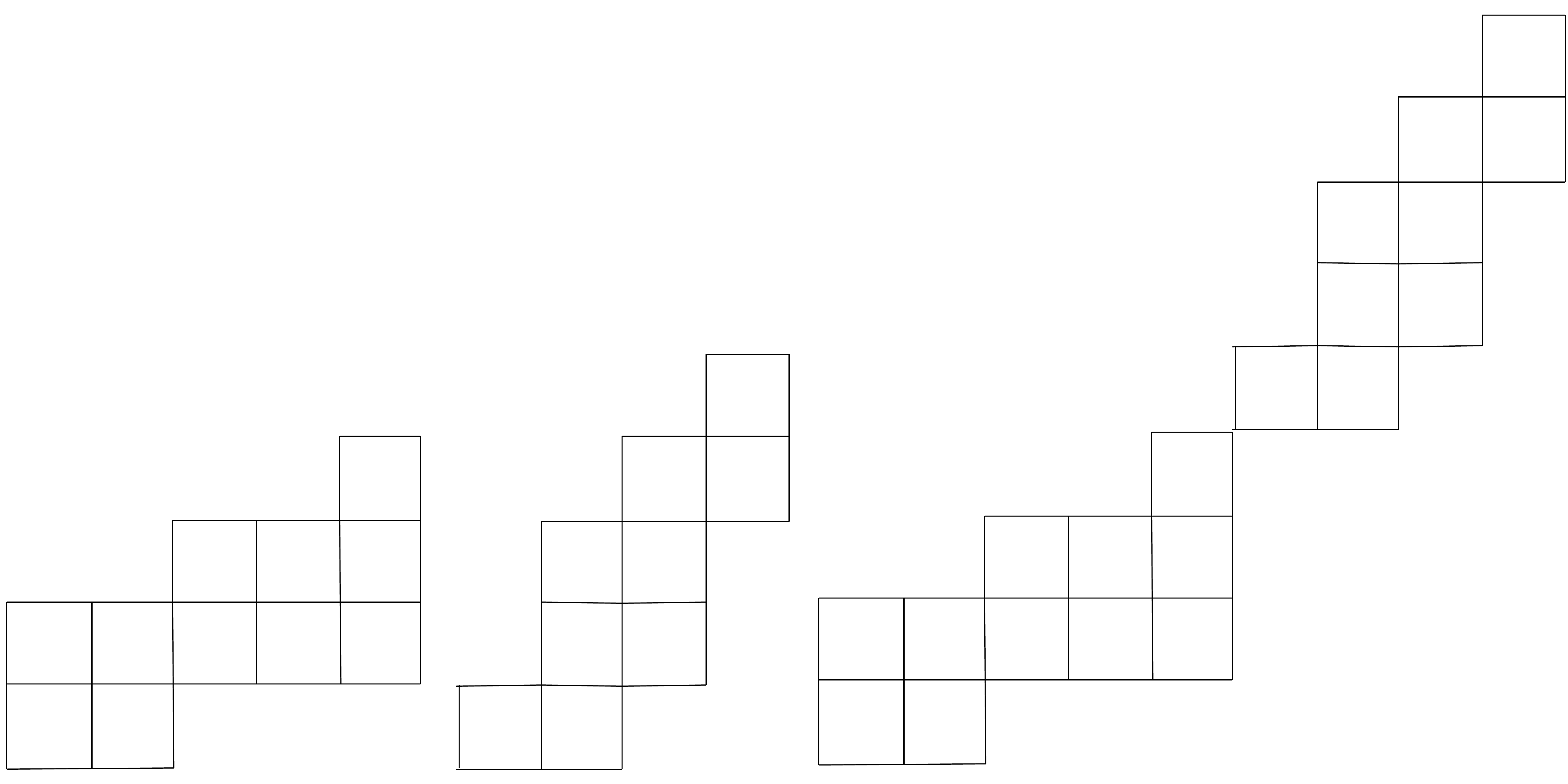}
 \caption{Diagrams of two LPMs and their direct sum.}
 \label{fig:directsumlpm}
\end{figure} 

\smallskip
	
In this paper we define a special class of LPMs, whose members are called snakes. An LPM is called \emph{snake} if it has at least two elements, is connected and has a diagram without interior lattice points. See Figure~\ref{fig:snakeslabels} for an example. 
We represent a snake as $S(a_1,a_2,\ldots,a_n)$ if starting from the origin its diagram encloses $a_1\ge 1$ squares to the right, then $a_2\ge 2$ squares up, then $a_3\ge 2$ squares to the right and so on up to $a_n\ge 2$, where the last square counted by each $a_i$ coincides with the first square counted by $a_{i+1}$ for all $i\leq n-1$. We call $S(1)$ the \emph{trivial snake} (one square).

\begin{figure}[ht] 
\centering
 \includegraphics[width=.5\textwidth]{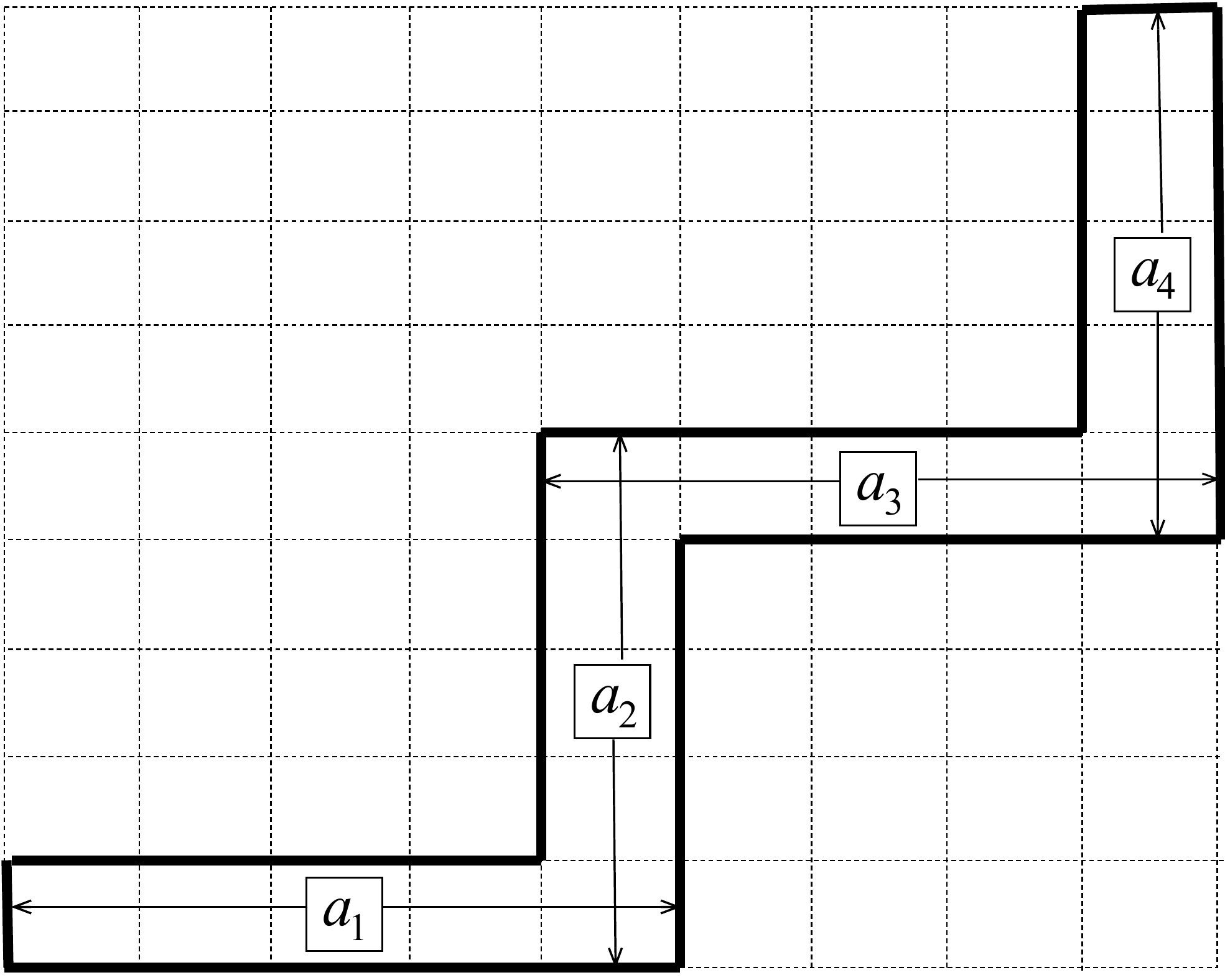}
 \caption{The diagram of a snake.}
 \label{fig:snakeslabels}
\end{figure}

The duality of LPMs depicted in Figure~\ref{fig:duallpm} restricts to snakes, i.e., the class of snakes is closed under duality. An example is illustrated in Figure~\ref{fig:snakeslabels1}.  More precisely, the following is easy to see:

\begin{obs}\label{obs:snakedual}
 Let $n$ be a positive integer and $a_1,\dots ,a_n$ be integers with $a_1\ge 1$ and $a_i\ge 2$ for all $2\leq i\leq n$. For the dual of $S(a_1,\ldots, a_n)$ we have $$S^*(a_1,\ldots, a_n)=\begin{cases} S(1,a_1,\ldots, a_n) &\mbox{if } a_1>1, \\
S(a_2,\ldots, a_n) & \mbox{if } a_1=1<n, \\
S(1) & \mbox{if } a_1=1=n. \end{cases}$$
\end{obs}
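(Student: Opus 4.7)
The plan is to invoke the geometric characterization of LPM duality recalled just before the observation: the diagram of $M^{*}$ is obtained from that of $M$ by reflection across the diagonal $y=x$. I would first note that this reflection swaps north and east steps, preserves connectedness, preserves the property of having no interior lattice points, and sends a diagram of at least two unit squares to another such diagram. Consequently, with the sole exception of the single square $S(1)$, which is fixed by the reflection, the reflection of a snake is again a snake. This already disposes of the case $a_{1}=1=n$, giving $S^{*}(1)=S(1)$.

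For the two remaining cases, I would determine the parameter sequence of the reflected snake by direct inspection of its staircase, keeping in mind that the canonical encoding $S(b_{1},\ldots,b_{m})$ requires the initial segment of the staircase, starting at the origin, to be horizontal. If $a_{1}>1$, then the original snake begins with a horizontal run of $a_{1}\geq 2$ squares, whose leftmost square sits at the origin; reflecting turns this into a vertical run of $a_{1}\geq 2$ squares at the left boundary of the dual diagram. To rewrite this reflected shape in canonical form, I would single out the bottom square of that vertical run as an artificial initial horizontal segment of length $1$ sharing its cell with the vertical run of length $a_{1}$, and then read off the subsequent alternating runs as $a_{2},a_{3},\ldots,a_{n}$, yielding $S^{*}(a_{1},\ldots,a_{n})=S(1,a_{1},\ldots,a_{n})$. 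If instead $a_{1}=1$ and $n>1$, the original snake begins at the origin with a single square that is immediately shared with a vertical run of $a_{2}\geq 2$; after reflection this becomes a single square at the origin shared with a horizontal run of $a_{2}$, which is already the canonical beginning of a snake whose first parameter is $a_{2}$, giving $S^{*}(a_{1},\ldots,a_{n})=S(a_{2},\ldots,a_{n})$.

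The main (in fact, essentially the only) difficulty is the bookkeeping for the squares shared between consecutive runs, which forces the asymmetry between the two non-trivial cases. To guard against a sign/off-by-one error I would verify both formulas on small examples, for instance $S(2,3)$, whose diagram has squares $(0,0),(1,0),(1,1),(1,2)$, and whose reflection has squares $(0,0),(0,1),(1,1),(2,1)=S(1,2,3)$, matching the first case; and $S(1,2,2)$, whose diagram has squares $(0,0),(0,1),(1,1)$, and whose reflection $(0,0),(1,0),(1,1)=S(2,2)$ matches the second. Once these cases are verified the three-part formula follows by the geometric reasoning above.
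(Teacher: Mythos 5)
Your proof is correct and follows exactly the route the paper intends: the paper states this observation without proof, merely noting that the reflection of the diagram across $y=x$ realizes LPM duality (cf.\ its Figure of $S(1,5,3,4)$ and $S^*(1,5,3,4)=S(5,3,4)$), and your careful bookkeeping of the shared corner squares, together with the verified small examples, fills in precisely the ``easy to see'' details.
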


Observation~\ref{obs:snakedual} is useful since in some of our results it allows to assume $a_1\ge 2$ because (except for the trivial snake) the case $S(1,a_2,\dots ,a_n)$ can be treated via its dual $S^*(1,a_2,\dots ,a_n)=S(a_2,a_3,\dots ,a_n)$.

%


\begin{figure}[ht] 
\centering
 \includegraphics[width=.5\textwidth]{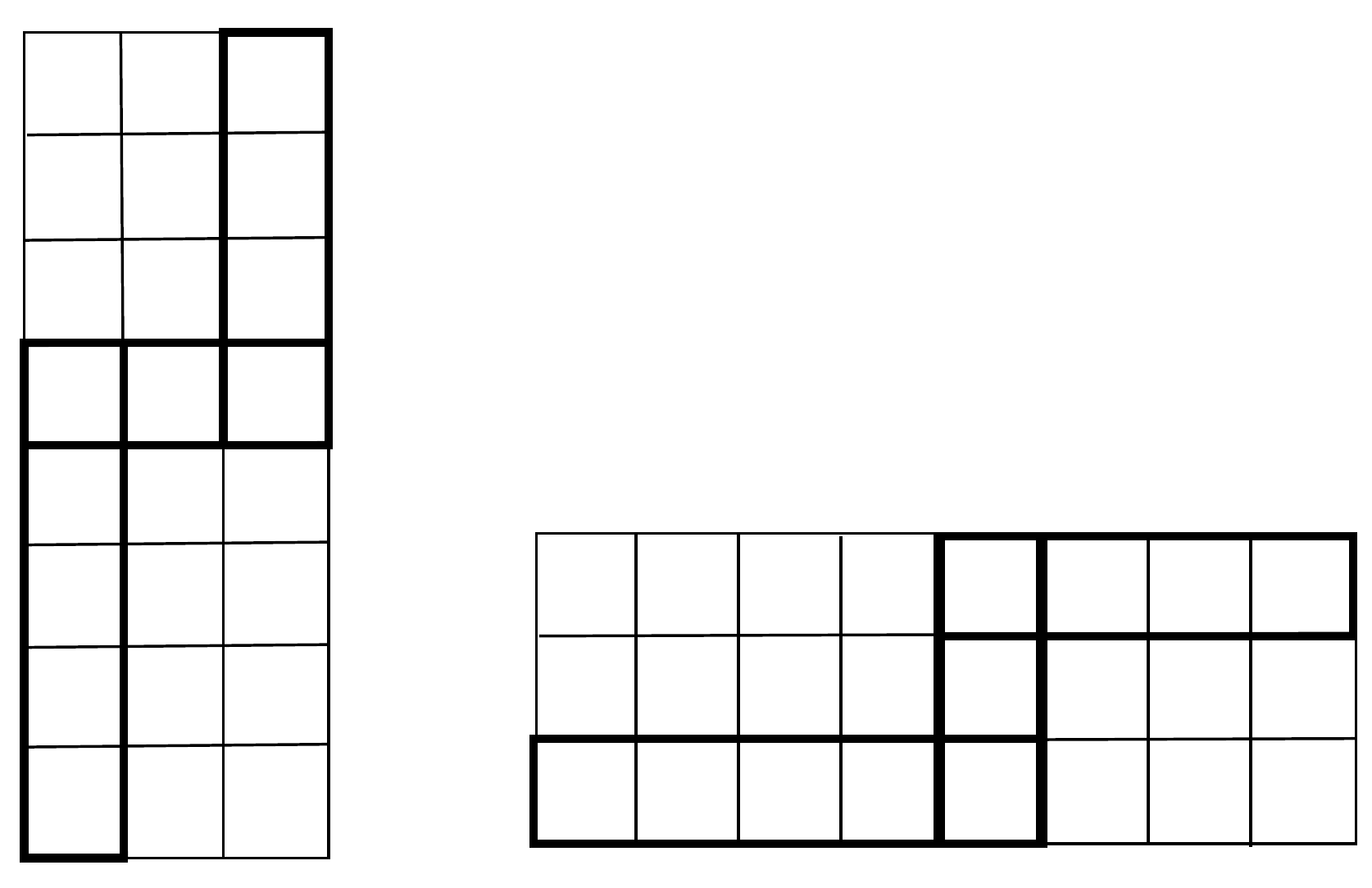}
 \caption{The snake $S(1,5,3,4)$ and its dual $S^*(1,5,3,4)=S(5,3,4)$.}
 \label{fig:snakeslabels1}
\end{figure}

The rest of this section is devoted to finding exact formulas for some values of the Tutte polynomial for snakes: $T(S;2,0)$, $T(S;0,2)$ and $T(S;1,1)$. These formulas will be crucial to prove our main result in Section~\ref{sec:mwconj}. For obtaining them, it will turn out to be useful to view snakes as graphic matroids.

%

To this end we introduce a special family of graphs, whose family of associated graphic matroids will turn out to coincide with the class of snakes.
Let $\ell$ be a positive integer and $ c =(c_1,c_2,\ldots, c_{\ell})$ and $ d=(d_1,d_2,\ldots,d_{\ell-1})$ be vectors of positive integers. The \emph{multi-fan} $F(c,d)$ is the graph consisting of a path $(v_1,v_1^2,\ldots, v_1^{d_1},v_2,v_2^2,\ldots, v_2^{d_2},\ldots,v_{\ell-1},v_{\ell-1}^2,\ldots, v_{\ell-1}^{d_{\ell-1}},v_{\ell})$ plus a single vertex $x$ that is connected by a bundle of $c_i\geq 1$ parallel edges to $v_i$ for every $1\leq i\leq \ell$, see Figure~\ref{fig:fanex}. If $\ell=1$, then the multi-fan $F(c_1)$ consists of a single bundle of $c_1$ parallel edges.

\begin{figure}[ht] 
\centering
 \includegraphics[width=.7\textwidth]{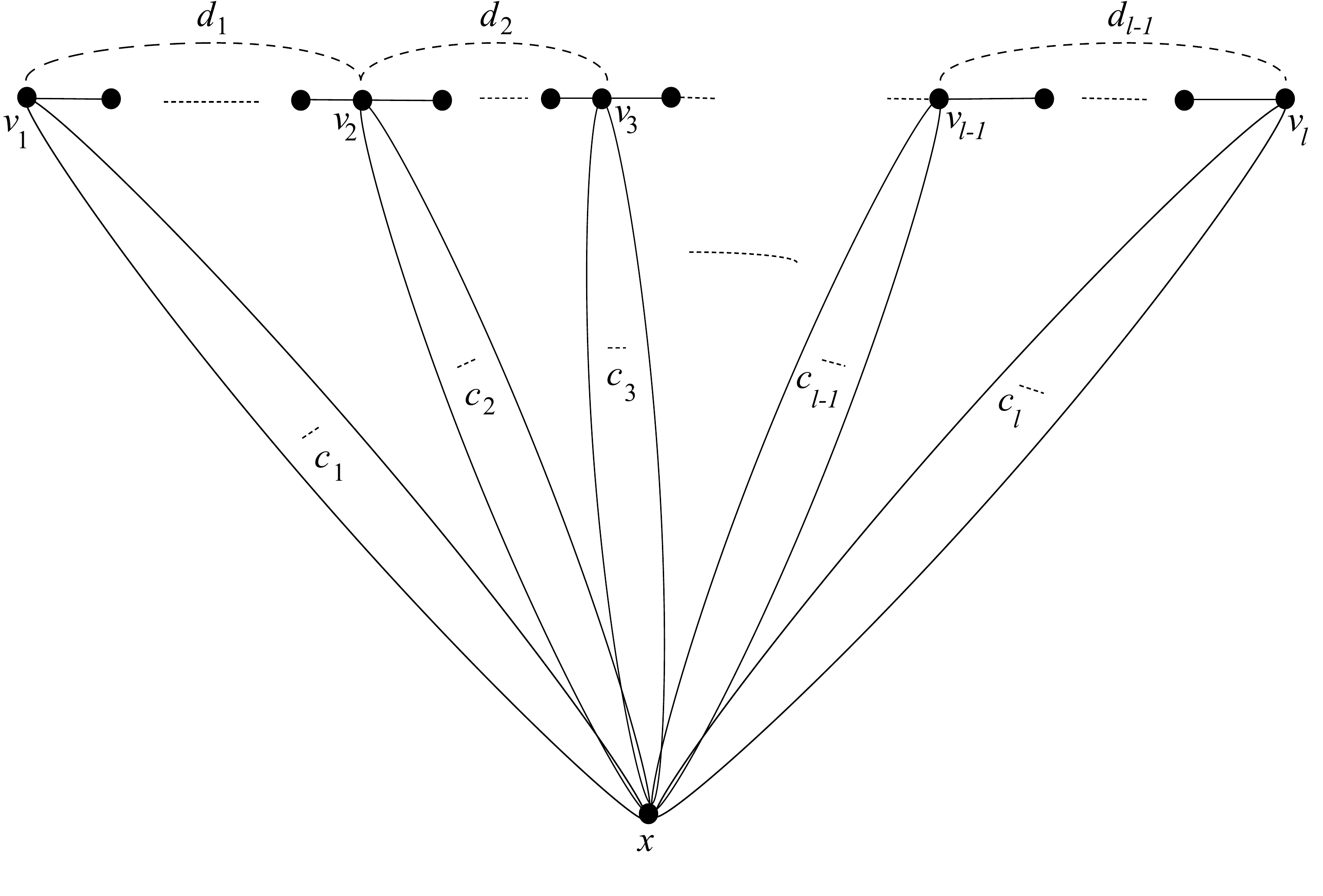}
 \caption{A multi-fan $F(c,d)$.}
 \label{fig:fanex}
\end{figure} 

Note that the ordinary {\em fan} coincides with the multi-fan with parameters $c=(1,1,\ldots,1)$ and $d=(1,1,\ldots,1)$. Also, a multi-fan is a series parallel graph created by alternately adding parallel edges from $x$ to the $v_i$'s and adding series edges from each $v_i$ to $v_{i+1}$. 


We are now ready to prove our correspondence between snakes and multi-fans. 
\begin{thm}\label{thm:charsnake} Let $a_1,\dots ,a_n$ be integers with $a_1\ge 1$ and $a_i\ge 2$ for each $i=2,\dots ,n$. The snake $S(a_1,a_2,\ldots,a_n)$ is isomorphic to the graphic matroid associated to the multi-fan $F(c,d)$, where
$$c=\begin{cases} (a_1+1) &\mbox{if } n=1, \\
(a_1,a_3-1,\ldots,a_{2k-1}-1,a_{2k+1}) &\mbox{if } n=2k+1>1, \\
(a_1,a_3-1,\ldots,a_{2k-1}-1,1) & \mbox{if } n=2k>1.\end{cases}$$ and 
 $$d=(a_2-1,a_4-1,\dots ,a_{2k}-1).$$
\end{thm}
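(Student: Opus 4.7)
My plan is to prove the isomorphism by constructing an explicit bijection between the ground set of the snake $S(a_1,\ldots,a_n)$ and the edge set of the multi-fan $F(c,d)$, and then verifying that this bijection carries bases to bases. The rank agreement is immediate: the snake has rank equal to the number of $N$-steps in its lower bounding path, which a direct count from the $a_i$'s shows equals $|V(F(c,d))|-1$. The base case $n=1$ is trivial, as $S(a_1)$ is the uniform matroid $U_{1,a_1+1}$, which is the graphic matroid of $F(a_1+1)$, a bundle of $a_1+1$ parallel edges.

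For the general case I would first read off the transversal presentation $(A_1,\ldots,A_r)$ of $S(a_1,\ldots,a_n)$ by writing the boundary paths $P$ and $Q$ explicitly as words in $\{E,N\}$ from the segment data. The resulting intervals $A_i=[t_i,s_i]$ have a very regular structure: $A_1$ is a \emph{large} interval of size $a_1+1$; each transition from a vertical segment $a_{2j}$ to a horizontal segment $a_{2j+1}$ produces a large interval of size $a_{2j+1}+1$; all $A_i$'s internal to a vertical segment are $2$-element intervals; and the last interval $A_r$ has size $a_n+1$ if $n$ is odd but only $2$ if $n$ is even. I would then set up the bijection by traversing the edges of $F(c,d)$ along the outer contour---the $c_1$ parallel edges at $v_1$, then the $d_1$ path edges from $v_1$ to $v_2$, then the $c_2$ parallel edges at $v_2$, and so on---and identifying these in order with the ground set elements $1,2,\ldots,m+r$. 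Under this bijection, each large interval collects a parallel bundle together with the neighboring path edges it abuts, while each $2$-element interval pairs up two consecutive path edges.

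To finish the proof one verifies that a transversal of $(A_i)$ corresponds to a spanning tree of $F(c,d)$: the $2$-element intervals enforce the condition that at most one path edge in each $d_j$-chain may be omitted (else a degree-$2$ subdivision vertex of $F(c,d)$ becomes isolated), while the large intervals record the choice of which parallel edge is used in each bundle, exactly as required for the tree structure. This can be carried out either directly (a transversal is determined by a nondecreasing $0/1$-pattern on each chain of $2$-element intervals together with a free choice in each large interval, which matches the spanning-tree case analysis) or via induction on $n$ using deletion/contraction.

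The main obstacle is the parity bookkeeping. When $n$ is even the last segment of the snake is vertical, no large interval appears on the right end of the presentation, and correspondingly the last entry of $c$ is $1$ (a single edge $xv_\ell$) rather than a proper bundle $a_n$; when $n$ is odd the last entry is genuinely $a_n$. Observation~\ref{obs:snakedual} lets us assume $a_1\geq 2$ when convenient by dualizing, simplifying the left endpoint, but the right-endpoint parity split appears unavoidable. A secondary subtlety is the \textbf{transition element} shared between a large interval and the adjacent $2$-element interval: in the bijection this element corresponds to the first path edge immediately after the parallel bundle, and tracking this overlap correctly is essential both for the bijection count and for matching bases to spanning trees.
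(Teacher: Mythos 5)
Your proposal is correct and follows essentially the same route as the paper: both identify the ground set elements with the edges of the multi-fan via the interval presentation of the snake as a transversal matroid (each interval $A_i$ corresponding to a path vertex, its elements to the edges incident to that vertex), and then match transversals with spanning trees. The only difference is in the final verification, where the paper avoids your chain-by-chain case analysis by rooting a spanning tree at $x$, orienting it away from the root, and reading off the transversal from the unique in-edge at each path vertex.
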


\begin{proof}
  Given the intervals $A_1,\ldots A_{r(M)}$ representing $M=S(a_1,a_2,\ldots,a_n)$ as a transversal matroid, we associate a fan $F$ to $M$, such that the vertices $w_1,\ldots w_{r(M)}$ on the path of $F$ correspond to $A_1,\ldots A_{r(M)}$ in this order. Moreover, the number of parallel edges of a vertex $w_i$ to the special vertex $x$ of $F$ is the number of elements of $A_i$, that are not contained in any other $A_j$. 
  See Figure~\ref{fig:snakeandfan} for an illustration.
  Note that the parameters of $F$ depend on those of the snake $M$ exactly as claimed in the statement of the theorem. 
  Furthermore, note that by the definition of snakes, any two consecutive intervals $A_i, A_{i+1}$ share precisely one element. This gives that the edges incident to vertex $w_i$ of $F$ correspond to the elements of $M$ contained in $A_i$. 
  

%
%
%
%

 \begin{figure}[ht] 
\centering
 \includegraphics[width=.7\textwidth]{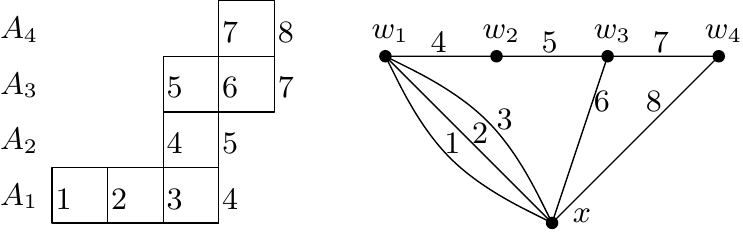}
 \caption{A snake and its associated multi-fan.}
 \label{fig:snakeandfan}
\end{figure} 

To show that both matroids are isomorphic, we give a correspondence between their bases. That is, we prove that the spanning trees of $F$ are in bijection with the transversals of $A_1, \ldots, A_{r(M)}$. Let $T$ be a spanning tree of $G$. We root $T$ at $x$ and orient all its edges away from $x$. Now, every edge $e$ of $T$ corresponds to an element of the ground set of $M$. We associate $e$ to the vertex $w_i$ it is oriented to, which in turn corresponds to an $A_i$ containing $e$. Since every vertex except $x$ has indegree $1$, this mapping proves that the edges of $T$ form a transversal of $A_1, \ldots, A_{r(M)}$. 

Conversely, suppose that we are given a transversal $T=\{x_1, \ldots x_{r(M)}\}$ of $A_1,\dots ,A_{r(M)}$. Since both the sequence of left endpoints and the sequence of right endpoints of $A_1,\dots ,A_{r(M)}$ are increasing, , we may assume $x_1\leq\ldots\leq x_{r(M)}$ and that $x_i \in A_i$ for all $i$. An edge corresponding to an element $x_i$ of the transversal can be oriented towards the vertex $w_i$ representing the $A_i$ to which $x_i$ is assigned. Since $T$ is a transversal, every vertex on the path of $F$ has indegree one. This implies, that $x$ has outdegree at least $1$. Moreover, vertex $x$ has indegree $0$. Consequently, the obtained graph is spanning and contains no cycles. We have obtained a tree rooted in $x$ and oriented away from $x$.
This gives the desired bijection.
%
%
\end{proof}

We remark that an alternative way of proving that snakes correspond to multi-fans is to use~\cite[Theorem 6.7]{Bon-06} where the structure of LPMs is described in terms of principal extensions.
The following corollary will be useful later on, when treating matroid duality with respect to snakes. 


\begin{cor}\label{cor:dualsnake} Let 
$a_1,\dots, a_n$ be integers with $a_1\ge 1$ and $a_i\ge 2$ for each $i=2,\dots ,n$. Then, the dual matroid $S^*(a_1,\dots ,a_{n})$ is isomorphic to the graphic matroid associated to the multi-fan $F(c',d')$ with
$$c'=\begin{cases} (1,a_2-1,a_4-1,\dots ,a_{2k}-1,1) &\mbox{if } n=2k+1\mbox{ and } a_1>1, \\(1,a_2-1,a_4-1,\dots ,a_{2k-2}-1,a_{2k}) & \mbox{if } n=2k\mbox{ and } a_1>1,\\
(a_2+1) & \mbox{if } n=2\mbox{ and } a_1=1,\\
(a_2,a_4-1,\dots ,a_{2k}-1,1) &\mbox{if } n=2k+1\mbox{ and } a_1=1,
\\(a_2,a_4-1,\dots ,a_{2k-2}-1,a_{2k}) & \mbox{if } n=2k\mbox{ and } a_1=1,
\end{cases}$$ 

and 

$$d'=\begin{cases} (a_1-1,a_3-1,\dots ,a_{2k+1}-1) &\mbox{if } n=2k+1\mbox{ and } a_1>1, \\(a_1-1,a_3-1,\dots ,a_{2k-1}-1) & \mbox{if } n=2k\mbox{ and } a_1>1,\\
(a_3-1,a_5-1,\dots ,a_{2k+1}-1) &\mbox{if } n=2k+1\mbox{ and } a_1=1,
\\(a_3-1,a_5-1,\dots ,a_{2k-1}-1) & \mbox{if } n=2k\mbox{ and } a_1=1.
\end{cases}$$ 
\end{cor}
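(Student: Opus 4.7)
The plan is to derive this corollary by composing the two results already at our disposal: Observation~\ref{obs:snakedual}, which tells us that the dual of a snake is again a snake (with explicit parameters), and Theorem~\ref{thm:charsnake}, which realizes any snake as the graphic matroid of an explicit multi-fan. So the scheme is simply to express $S^*(a_1,\dots,a_n)$ as a snake using the observation, and then to apply the theorem to that snake; all that remains is to match the resulting $c'$ and $d'$ to the five cases in the statement.

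Concretely, I would split along the trichotomy of Observation~\ref{obs:snakedual}. In the case $a_1>1$, the dual is $S(1,a_1,a_2,\ldots,a_n)$, which is a snake on $n+1$ parameters $(b_1,\dots,b_{n+1})=(1,a_1,\dots,a_n)$. If $n=2k+1$, then $n+1=2(k+1)$ is even and Theorem~\ref{thm:charsnake} produces
\[
c'=(b_1,b_3-1,\dots,b_{2k+1}-1,1)=(1,a_2-1,a_4-1,\dots,a_{2k}-1,1),
\]
and $d'=(b_2-1,b_4-1,\dots,b_{2k+2}-1)=(a_1-1,a_3-1,\dots,a_{2k+1}-1)$, matching the first case. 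If instead $n=2k$, then $n+1$ is odd and the odd formula of Theorem~\ref{thm:charsnake} yields the second case by the analogous shift. In the case $a_1=1<n$, the dual is $S(a_2,\dots,a_n)$, a snake on $n-1$ parameters; applying Theorem~\ref{thm:charsnake} according as $n-1$ is even or odd recovers the fourth and fifth cases respectively. Finally, when $n=2$ and $a_1=1$, the dual is $S(a_2)$, which Theorem~\ref{thm:charsnake} (in its $n=1$ branch) realizes as a single bundle of $a_2+1$ parallel edges, giving the third case.

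The only issue worth checking is that all the side-conditions line up: that $a_1=1$ together with $a_i\ge 2$ for $i\ge 2$ is compatible with the parameter hypotheses needed to invoke Theorem~\ref{thm:charsnake} on the dual snake (that is, the first parameter of the dual snake is $\ge 1$ and the remaining ones are $\ge 2$), and that the boundary values of the index ranges indeed produce the displayed tuples (in particular that the trailing $1$ in the even branch and the trailing $a_{2k+1}$ or $a_{2k}$ in the odd branch appear in the right positions after the index shift caused by prepending $1$ or deleting $a_1$). This is just careful bookkeeping rather than a substantive obstacle; the structural content of the corollary is already contained in the combination of Observation~\ref{obs:snakedual} and Theorem~\ref{thm:charsnake}.
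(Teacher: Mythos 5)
Your proposal is correct and is exactly the paper's own argument: the paper's proof consists precisely of applying Observation~\ref{obs:snakedual} to rewrite $S^*(a_1,\dots,a_n)$ as a snake and then invoking Theorem~\ref{thm:charsnake} on that snake. Your case-by-case verification of the index bookkeeping (including the $n=2$, $a_1=1$ boundary case) checks out against the stated $c'$ and $d'$.
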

\begin{proof}
Observation~\ref{obs:snakedual} yields a snake representation of $S^*(a_1,\dots ,a_{n})$. Theorem~\ref{thm:charsnake} applied to this snake yields the multi-fan as claimed above.
%
%
%
%
\end{proof}
 

The following lemma provides a formula for the number of acyclic orientations of a multi-fan.

\begin{lem}\label{lem:acyclic} Let  $c =(c_1,\ldots, c_{\ell})$ and $ d=(d_1,\ldots,d_{\ell-1})$ be vectors of positive integers. Then
 $$\alpha(F(c,d))= 2\prod_{j=1}^{\ell-1}\left(2^{d_j+1}-1\right).$$
\end{lem}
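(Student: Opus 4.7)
The plan is to prove the formula by induction on $\ell$, after first reducing to the case $c_1=\cdots=c_\ell=1$. This reduction is immediate: in any acyclic orientation of $F(c,d)$, the $c_i$ parallel edges between $x$ and $v_i$ must all be oriented identically, since any two oriented oppositely would form a directed $2$-cycle. Hence each bundle behaves like a single edge and $\alpha(F(c,d))=\alpha(F(\mathbf{1},d))$; I write $\hat F_\ell$ for the latter. The base case $\ell=1$ is a single edge, giving $\alpha(\hat F_1)=2$, which matches the empty product.

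For the inductive step, I apply the deletion-contraction formula at $(x,y)=(2,0)$ to the edge $e=xv_\ell$. Since $\ell\ge 2$, the edge $e$ lies on the cycle formed together with the spoke $xv_{\ell-1}$ and the subpath from $v_{\ell-1}$ to $v_\ell$, so $e$ is neither a bridge nor a loop, and $\alpha(\hat F_\ell)=\alpha(\hat F_\ell\setminus e)+\alpha(\hat F_\ell/e)$. The deleted graph $\hat F_\ell\setminus e$ is $\hat F_{\ell-1}$ together with a path of $d_{\ell-1}$ bridges dangling off $v_{\ell-1}$; since bridges can be oriented arbitrarily without creating cycles, $\alpha(\hat F_\ell\setminus e)=2^{d_{\ell-1}}\alpha(\hat F_{\ell-1})$. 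The contracted graph $\hat F_\ell/e$ identifies $x$ with $v_\ell$: when $d_{\ell-1}\ge 2$, this yields a multi-fan on the same $\ell$ attachment points but with the last path length reduced to $d_{\ell-1}-1$ (the previous penultimate path vertex becomes the new $v_\ell$); when $d_{\ell-1}=1$, the single contracted edge becomes parallel to the spoke $xv_{\ell-1}$, and the resulting parallel bundle collapses (without changing $\alpha$) to $\hat F_{\ell-1}$.

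Iterating this recursion on the last path length $d$ (with $\alpha(\hat F_{\ell-1})$ supplied by the outer induction on $\ell$), one obtains the geometric sum
$$\alpha(\hat F_\ell)=\big(2^{d_{\ell-1}}+2^{d_{\ell-1}-1}+\cdots+2+1\big)\,\alpha(\hat F_{\ell-1})=\big(2^{d_{\ell-1}+1}-1\big)\,\alpha(\hat F_{\ell-1}),$$
and the outer induction on $\ell$ then yields the product formula $2\prod_{j=1}^{\ell-1}(2^{d_j+1}-1)$. The main obstacle is the bookkeeping of the graph produced by contracting $xv_\ell$, particularly the boundary case $d_{\ell-1}=1$ where parallel edges appear; beyond that, the argument is a routine double induction on $\ell$ and on the last path length.
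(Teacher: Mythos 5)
Your proof is correct, but it follows a genuinely different route from the paper's. The paper argues by a direct sequential count: fix an orientation of the first bundle ($2$ choices), then observe that for each $j$ the block consisting of the $d_j$ path edges between $v_j$ and $v_{j+1}$ together with the $(j+1)$-st bundle admits $2^{d_j+1}$ orientations of which exactly one closes a directed cycle through $x$, giving the product $2\prod_j(2^{d_j+1}-1)$ at once. (Implicit there is the observation that any directed cycle in a multi-fan passes through $x$ exactly once and hence forces a directed cycle between some \emph{consecutive} pair $v_j,v_{j+1}$, so that forbidding the consecutive cycles suffices.) You instead reduce to $c=\mathbf{1}$ and run a double induction driven by deletion--contraction at the last spoke, evaluated at $(2,0)$; your bookkeeping of $\hat F_\ell\setminus e$ (dangling bridges contribute $2^{d_{\ell-1}}$) and of $\hat F_\ell/e$ (last path length drops by one, with the parallel-edge collapse at $d_{\ell-1}=1$) is accurate, and the geometric sum telescopes to the same factor $2^{d_{\ell-1}+1}-1$. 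What each approach buys: the paper's count is shorter and exhibits the factorization combinatorially, at the cost of the (easy but unstated) structural fact about cycles in multi-fans; your recursion is more mechanical and sidesteps that structural point entirely, since the Tutte recursion handles global acyclicity for you, at the cost of the case analysis in the contraction step.
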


\begin{proof}
First observe that parallel edges must be oriented in the same direction. Start by orienting the first bundle of $c_1$ parallel edges in one of the two possible ways. Then the $d_1$ edges between $v_1$ and $v_2$ together with the second bundle of $c_2$ parallel edges can be oriented in a total of $2^{d_1+1}$ ways, all but exactly one of which are acyclic. Then the $d_2$ edges between $v_2$ and $v_3$
plus the third bundle of $c_3$ parallel edges can be oriented in $2^{d_2+1}-1$ acyclic ways, and so on, obtaining
$$2\prod_{j=1}^{\ell-1}\left(2^{d_j+1}-1\right)$$
acyclic orientations, as desired.  
\end{proof}


%

%

\begin{prop}
  \label{prop:prodsnakes}
  For any positive integer $n$ and $a_1,\dots ,a_n$ integers with $a_1\ge 1$ and $a_i\ge 2$ for each $i=2,\dots,n$, we have
  
  \begin{align}
    \label{eq:snakeprod}
    T(S(a_1,\ldots,a_n);0,2)\cdot T(S(a_1,\ldots,a_n);2,0))=2^2 \prod_{i=1}^n (2^{a_i}-1).  
  \end{align}
  
\end{prop}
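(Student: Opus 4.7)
The plan is to express each factor in the product via the multi-fan realizations already established, and then reduce everything to a bookkeeping exercise on the exponents. By Theorem~\ref{thm:charsnake}, $S(a_1,\ldots,a_n)$ is the graphic matroid of a multi-fan $F(c,d)$ with $d=(a_2-1,a_4-1,\ldots,a_{2k}-1)$, where $k=\lfloor n/2\rfloor$. Since the Tutte polynomial of a graphic matroid at $(2,0)$ counts acyclic orientations, Lemma~\ref{lem:acyclic} gives
\[
T(S(a_1,\ldots,a_n);2,0)\;=\;\alpha(F(c,d))\;=\;2\prod_{j=1}^{k}\bigl(2^{a_{2j}}-1\bigr).
\]
Dually, using $T(M;0,2)=T(M^*;2,0)$, Corollary~\ref{cor:dualsnake} provides a multi-fan $F(c',d')$ whose graphic matroid is $S^*(a_1,\ldots,a_n)$, and in every case listed there, the components of $d'$ are precisely the numbers $a_i-1$ as $i$ runs over the odd indices in $\{1,\ldots,n\}$, with the term $a_1-1$ omitted exactly when $a_1=1$. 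Another application of Lemma~\ref{lem:acyclic} then yields
\[
T(S(a_1,\ldots,a_n);0,2)\;=\;\alpha(F(c',d'))\;=\;2\prod_{i\ \mathrm{odd}}\bigl(2^{a_i}-1\bigr),
\]
where the product ranges over odd $i\in\{1,\ldots,n\}$ (and the $i=1$ factor is genuinely present whenever $a_1>1$, while it is harmlessly absent when $a_1=1$).

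Multiplying the two displays gives
\[
T(S;2,0)\cdot T(S;0,2)\;=\;4\cdot\prod_{i\ \mathrm{even}}\bigl(2^{a_i}-1\bigr)\cdot\prod_{i\ \mathrm{odd}}\bigl(2^{a_i}-1\bigr),
\]
which equals $4\prod_{i=1}^{n}(2^{a_i}-1)$ precisely because the even-indexed factors coming from the primal and the odd-indexed factors coming from the dual partition $\{1,\ldots,n\}$. The only subtlety is the case $a_1=1$: here the odd-indexed factor corresponding to $i=1$ is missing from the dual's product, but since $2^{a_1}-1=2^1-1=1$, inserting it back does not change the value, so the formula holds uniformly.

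The genuinely routine part is simply verifying, for each of the parity cases of $n$ and for $a_1=1$ versus $a_1>1$, that the vectors $d$ and $d'$ as read off from Theorem~\ref{thm:charsnake} and Corollary~\ref{cor:dualsnake} indeed contain exactly the promised $a_i-1$'s. The main obstacle, if any, is this case-chasing; everything else is an immediate application of the earlier multi-fan formula. One should also spot-check the degenerate case $n=1$ separately, since then the primal multi-fan collapses to a bundle of $a_1+1$ parallel edges (contributing $\alpha=2$), and the dual is either itself a bundle (when $a_1=1$) or the multi-fan $F((a_1,1),(a_1-1))$ (when $a_1>1$); either way the product is $4(2^{a_1}-1)$, matching the asserted formula.
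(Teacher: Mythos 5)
Your proof is correct and follows essentially the same route as the paper: realize the snake and its dual as multi-fans via Theorem~\ref{thm:charsnake} and Corollary~\ref{cor:dualsnake}, apply Lemma~\ref{lem:acyclic} to each, and observe that the $d$-vectors pick out the even-indexed and odd-indexed $a_i$'s respectively. The only (immaterial) difference is that the paper first reduces to $a_1\ge 2$ via Observation~\ref{obs:snakedual}, whereas you handle $a_1=1$ directly by noting the missing factor $2^{a_1}-1$ equals $1$; your small misstatement of $c'$ in the $n=1$ aside is harmless since Lemma~\ref{lem:acyclic} depends only on $d'$.
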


\begin{proof} The case of the trivial snake $S(1)$ is trivial. Now, considering the snake or its dual we can suppose that $a_1\ge 2$ by Observation~\ref{obs:snakedual}. By Theorem~\ref{thm:charsnake},
we can express $T(S(a_1,\ldots,a_n);2,0)$ as $\alpha(F(c,d))$. Note that the vector $d$ does not depend on the parity of $n$ and is the only one taken into account by Lemma~\ref{lem:acyclic}. We can thus conclude:

$$\begin{array}{ll}
T(S(a_1,\ldots,a_{n});2,0)& = \alpha(F(c,d))\\
& = 2\prod\limits_{\scriptsize \begin{array}{c}i=2\\ i-\text{even}\end{array}}^{n} (2^{a_i-1+1}-1)\\
& =2\prod\limits_{\scriptsize \begin{array}{c}i=2\\ i-\text{even}\end{array}}^{n} (2^{a_i}-1).
\end{array}$$

Now, we use $T(S(a_1,\ldots,a_n);0,2)=T(S^*(a_1,\ldots,a_{n});2,0)$, which we can express as $\alpha(F(c',d'))$ by Corollary~\ref{cor:dualsnake}. Furthermore since $a_1\geq 2$ the vector $d'$ in the corresponding multi-fan $F(c',d')$ does not depend on the parity of $n$, i.e., in both cases comprises all $a_i$ with odd index, and is the only part of the parameters of $F(c',d')$ that is taken into account by Lemma~\ref{lem:acyclic}. We conclude:
$$\begin{array}{ll}
   T(S(a_1,\ldots,a_{n});2,0)& = T(S^*(a_1,\ldots,a_{n});0,2)\\
& = \alpha(F(c',d'))\\
& = 2\prod\limits_{\scriptsize \begin{array}{c}i=1\\ i-\text{odd}\end{array}}^{n} (2^{a_i-1+1}-1)\\
& =2\prod\limits_{\scriptsize \begin{array}{c}i=1\\ i-\text{odd}\end{array}}^{n} (2^{a_i}-1).
\end{array}$$
Obtaining,
$$T(S(a_1,\ldots,a_n);0,2)\cdot T(S(a_1,\ldots,a_n);2,0))=2^2 \prod_{i=1}^n (2^{a_i}-1).$$
\end{proof}

Now we turn our attention to $T(S;1,1)$. We will count the number of bases of a snake directly from its diagram. Let $\mathrm{Fib}(n)$ be the set of all binary sequences $b=(b_1,\ldots,b_n)$ of length $n$ such that there are no two adjacent $1$'s.

\begin{prop}
\label{prop:bsnakes}
  For any positive integer $n$ and $a_1,\dots ,a_n$ integers with $a_1\ge 1$ and $a_i\ge 2$ for each $i=2,\dots,n$ we have
  
  \begin{align}
   \label{eq:snakebases}
    T(S(a_1,\ldots,a_n),1,1)=\sum_{b \in \mathrm{Fib}(n+1)} \prod_{i=1}^n (a_i-1)^{1-|b_{i+1}-b_i|}.
  \end{align}

  Furthermore, for $n>1$ the following recursion holds:
  
  \begin{align}
   \label{eq:snakerec}
     \begin{split}
       T(S(a_1,\ldots,a_n),1,1)=&T(S(a_1,\ldots,a_{n-1}),1,1)+\\
       &(a_n-1)\cdot T(S(a_1,\ldots,a_{n-1}-1),1,1),
     \end{split}    
   \end{align}
  where we set $S(a_1,\ldots,a_{n-1}-1):=S(a_1,\ldots,a_{n-2})$ if $a_{n-1}=2$ and $n>2$.
\end{prop}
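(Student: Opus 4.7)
The plan is to first prove the recursion (\ref{eq:snakerec}) by a direct combinatorial argument on lattice paths in the snake diagram, and then deduce the closed form (\ref{eq:snakebases}) by induction on $n$. A basis of $S(a_1,\ldots,a_n)$ corresponds to a lattice path from $(0,0)$ to the endpoint $P_n$ of the diagram, and since arm $n$ is a $1\times a_n$ strip whose first square is shared with arm $n-1$, $P_n$ is obtained from the endpoint $P_{n-1}$ of $S(a_1,\ldots,a_{n-1})$ by translating $a_n-1$ units in the direction of arm $n$. I would partition the paths from $(0,0)$ to $P_n$ according to whether they visit $P_{n-1}$. Paths through $P_{n-1}$ are in bijection with paths in $S(a_1,\ldots,a_{n-1})$: from $P_{n-1}$ there is a unique monotone extension along the outer edge of arm $n$ to $P_n$. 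This gives the first summand $T(S(a_1,\ldots,a_{n-1});1,1)$.

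The main obstacle is analyzing the paths that avoid $P_{n-1}$. Using the explicit description of the diagram near the joint square of arms $n-1$ and $n$ together with monotonicity of lattice paths, every such path is forced to pass through the lattice point $P'_{n-1}$ that is the endpoint of the snake $S(a_1,\ldots,a_{n-1}-1)$ (which by convention equals $S(a_1,\ldots,a_{n-2})$ when $a_{n-1}=2$). From $P'_{n-1}$ the path must proceed along the inner edge of arm $n$ and then make a single orthogonal step at one of $a_n-1$ allowed positions, after which it is forced to continue straight to $P_n$. This sets up a bijection between avoiding paths and pairs consisting of a path in $S(a_1,\ldots,a_{n-1}-1)$ (from $(0,0)$ to $P'_{n-1}$) and an integer $j\in\{1,\ldots,a_n-1\}$, contributing the summand $(a_n-1)T(S(a_1,\ldots,a_{n-1}-1);1,1)$ and completing the proof of (\ref{eq:snakerec}).

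With the recursion in hand, (\ref{eq:snakebases}) follows by induction on $n$. The base case $n=1$ is the direct check $(a_1-1)+1+1=a_1+1$ using $\mathrm{Fib}(2)=\{00,01,10\}$. For the inductive step, let $F_n(a_1,\ldots,a_n)$ denote the right-hand side of (\ref{eq:snakebases}). Splitting the outer sum in $F_n$ over the three admissible values of $(b_n,b_{n+1})$ yields
\[
F_n = F_{n-1}(a_1,\ldots,a_{n-1}) + (a_n-1)G^{(0)},
\]
where $G^{(0)}$ is the partial sum in $F_{n-1}(a_1,\ldots,a_{n-1})$ restricted to $\mathrm{Fib}(n)$-sequences ending with $0$. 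An analogous split of both $G^{(0)}$ and $F_{n-1}(a_1,\ldots,a_{n-1}-1)$ over the penultimate pair $(b_{n-1},b_n)$ shows both reduce to $(a_{n-1}-1)H^{(0)}+H^{(1)}$, where $H^{(x)}$ is the $\mathrm{Fib}(n-1)$-sub-sum restricted to $b_{n-1}=x$. Hence $G^{(0)}=F_{n-1}(a_1,\ldots,a_{n-1}-1)$, so $F_n$ satisfies the same recursion as $T(S(a_1,\ldots,a_n);1,1)$; the convention $S(a_1,\ldots,a_{n-2})$ when $a_{n-1}=2$ is automatic from the formula since the factor $0^{1-|b_n-b_{n-1}|}$ then forces $b_n\neq b_{n-1}$, yielding a bijection with $\mathrm{Fib}(n-1)$.
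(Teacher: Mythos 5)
Your argument is correct, and it inverts the logical structure of the paper's proof. Your combinatorial derivation of the recursion \eqref{eq:snakerec} --- partitioning bases according to whether the path visits the far corner $P_{n-1}$ of the joint square of arms $n-1$ and $n$, with the observation that any path missing $P_{n-1}$ is forced through the opposite corner $P'_{n-1}$ and then has exactly $a_n-1$ places to make its single orthogonal step --- is essentially the same decomposition the paper uses for its recursion (phrased there as passing or not through the upper right vertex of the square $C_n$). Where you genuinely diverge is on the closed form \eqref{eq:snakebases}: the paper proves it \emph{directly}, by labelling the two far vertices of each corner square with $0$ and $1$, assigning to every lattice path the resulting word in $\mathrm{Fib}(n+1)$, and counting the paths with a fixed word as $\prod_i(a_i-1)^{1-|b_{i+1}-b_i|}$; it then proves the recursion as a separate, secondary fact. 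You instead prove the recursion first and obtain the closed form by induction, verifying that the right-hand side $F_n$ satisfies the same recursion via the split over $(b_n,b_{n+1})$ and then over $(b_{n-1},b_n)$; your identity $G^{(0)}=(a_{n-1}-1)H^{(0)}+H^{(1)}=F_{n-1}(a_1,\ldots,a_{n-1}-1)$ checks out, as does your remark that $F_{n-1}(a_1,\ldots,a_{n-2},1)=F_{n-2}(a_1,\ldots,a_{n-2})$ because the factor $0^{1-|b_n-b_{n-1}|}$ forces $b_n\neq b_{n-1}$, which makes the boundary convention for $a_{n-1}=2$ consistent. The trade-off: the paper's direct bijection explains \emph{why} the sum is indexed by $\mathrm{Fib}(n+1)$ (each word records which corner of each turning square the path uses, which is what lets the authors connect the all-$2$'s case to Fibonacci numbers), whereas your route is more mechanical but self-contained once the recursion is established; just make sure the induction is stated as strong induction on $n$, since the $a_{n-1}=2$ case drops to $n-2$.
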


\begin{proof}

  Consider the snake $S(a_1,\ldots,a_n)$. If $n=1$ and $a_1=1$, on both sides of the equation we have $2$. In any other case, the snake has at least two squares. By duality, we may suppose that the snake starts with two adjacent horizontal squares, that is $a_1\ge 2$.
	
	Let $S(a_1,\ldots,a_n)=M[P,Q]$. We will label some points on the paths $P$ and $Q$ with $0$'s and $1$'s. As we explain the labeling, Figure~\ref{fig:onezero} may be used as a reference for the case $n=4$. We label as follows. On the snake consider $C_1$ the first square, $C_{n+1}$ the last square and for each $i\in \{2,\ldots,n\}$ let $C_i$ be the $(i-1)$-th square in which the snakes changes direction. For each square $C_i$ let $u_i$ be its upper left vertex and $v_i$ its lower right vertex. We label each $u_i$ with $1$ if $i$ is odd and with $0$ if $i$ is even. We label each $v_i$ with the label opposite to the one in $u_i$.
	
\begin{figure}[ht] 
  \centering
  \includegraphics[width=.5\textwidth]{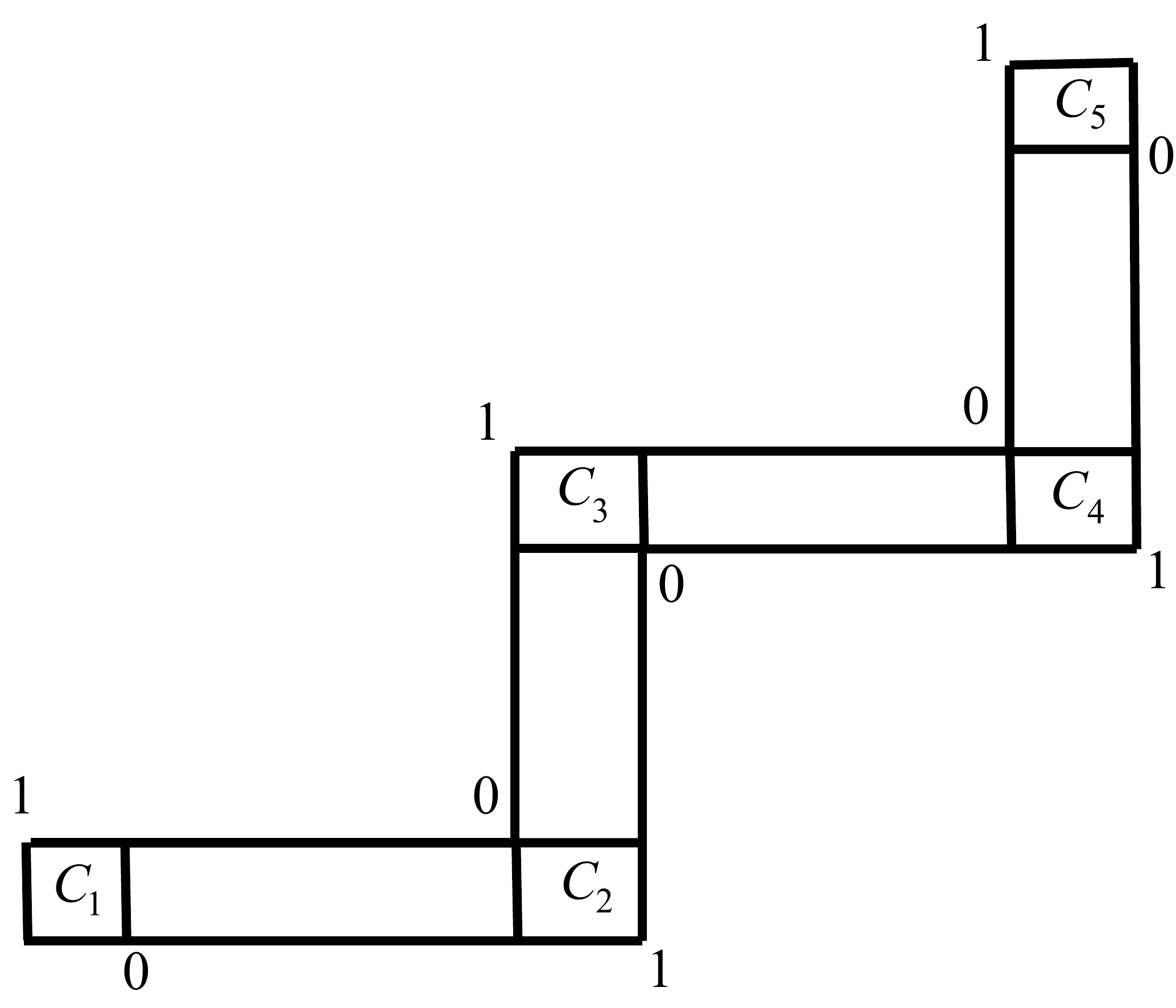}
  \caption{Labeling of $S(a_1,a_2,a_3,a_4)$ with zeros and ones.}
  \label{fig:onezero}
\end{figure} 
	
	Consider a lattice path within the diagram of $S(a_1,\ldots,a_n)$. For every $1\leq i\leq n+1$ this lattice path has to go through exactly one of the vertices $u_i$, $v_i$. Therefore, for each lattice path we can assign a binary sequence of length $n+1$. We claim that the formula in Equation \eqref{eq:snakebases} counts the number of lattice paths according to their corresponding binary sequence.
	
	First, it is impossible to go consecutively from a vertex labeled $1$ to another vertex labeled $1$. Therefore all the possible binary sequences are in $\mathrm{Fib}(n+1)$. Now we take a binary sequence $B=(b_1,\ldots,b_{n+1})$ and we count to how many lattice paths it corresponds. Consider the segment of the path that goes from the vertices in square $C_i$ to the vertices in square $C_{i+1}$.
	
	\begin{itemize}
		\item If we go from the vertex with label $0$ to the vertex with label $1$ or vice versa, there is exactly one way in which we can do it.
		\item There are exactly $a_i-1$ ways to go from the vertex with label $0$ to the vertex with label $0$, corresponding to the choice of the unique $N$-step, respectively $E$-step.
	\end{itemize}
	
	  Thus if the binary sequence is $B$, we can go from the vertices in $C_i$ to the vertices in $C_{i+1}$ in $(a_i-1)^{1-|b_{i+1}-b_i|}$ ways, and therefore there are
	
	\[
		\prod_{i=1}^n (a_i-1)^{1-|b_{i+1}-b_i|}
	\]
	
	lattice paths with corresponding sequence equal to $B$. This shows that the formula is correct.
	
	The recursive formula can be proved using Equation (\ref{eq:snakebases}), but we provide a combinatorial proof. To do so we verify whether the lattice path has gone through the upper right vertex of $C_n$ or not. If it did, by definition there are $T(S(a_1,\ldots,a_{n-1});1,1)$ ways of getting to that vertex and then the path to the end is completely determined. If it did not, then in square $C_n$ the path has to go through the vertex with label $0$, which can be done in $T(S(a_1,\ldots,a_{n-1}-1),1,1)$ ways. This has to be multiplied by the $a_n-1$ ways to complete the path avoiding the upper right vertex of $C_n$. This completes the argument.

\end{proof}

Notice that when $a_1=a_2=\ldots=a_n=2$ we are summing only $1$'s over all the sequences of $\mathrm{Fib}(n+1)$. It is a folklore result that the number of such sequences is the $(n+3)$-rd Fibonacci number, and thus Proposition~\ref{prop:bsnakes} can be regarded as a lattice path generalization of this. Indeed, the fact that the number of spanning trees of ordinary fans is counted by the Fibonacci numbers has been observed several times, see~\cite{Ban-81,Fie-74,Hil-74}.

%
%


\section{The multiplicative Merino-Welsh conjecture for LPMs}
\label{sec:mwconj}

We will now prove that the strongest version of Conjecture~\ref{conj:MMW} is true for LPMs. Notice that equality may hold. An easy example is the trivial snake. Since the Tutte polynomial of a direct sum is the product of the polynomials of the components of the direct sum, a direct sum of trivial snakes also yields equality.

More specifically, in this section we prove Theorem~\ref{thm:MWLPM} which is an improvement on the desired inequality by a constant factor except in the trivial cases mentioned above. 

We provide an inductive proof. The strategy is as follows:

\begin{itemize}
    \item We prove the theorem for snakes.
    \item We show that any connected LPM $M$ either is a snake, or it has an element $e$ such that both $M\setminus e$ and $M/e$ are connected LPMs with fewer elements.
    \item We state a straightforward lemma for proving the inequality for $M$ from the veracity of the inequality for $M\setminus e$ and $M/e$.
	\item We extend the result to disconnected but LC LPM.
\end{itemize}

Before starting with the first step in the strategy, let us make a remark. In Section~\ref{sec:lpmsnakes} we have shown that snakes are series parallel graphic matroids. Therefore, Conjecture~\ref{conj:MMW}.3 can be proved for snakes using the result in~\cite{Nob-14}. However, for the whole strategy to work we will need to prove first the sharper inequality for snakes. Thus we will need the precise results on the Tutte polynomial provided by Proposition~\ref{prop:prodsnakes} and Proposition~\ref{prop:bsnakes}.

\begin{prop}
    \label{prop:WMsnakes}
    If $M$ is a non-trivial snake, then    
    \[
      T(M;2,0)\cdot T(M;0,2)\geq \frac{4}{3}\cdot T(M;1,1)^2.
    \]  
\end{prop}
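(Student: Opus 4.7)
My plan is to combine Proposition~\ref{prop:prodsnakes} and Proposition~\ref{prop:bsnakes} and recast the target inequality as
$$T(S(a_1,\ldots,a_n);1,1)^2 \leq 3\prod_{i=1}^n(2^{a_i}-1),$$
and then prove this by induction on $n$. Both sides are invariant under matroid duality (since $T(M;1,1)=T(M^*;1,1)$ and $T(M;2,0)T(M;0,2)=T(M^*;2,0)T(M^*;0,2)$), so Observation~\ref{obs:snakedual} lets me assume $a_1\geq 2$ throughout. The base case $n=1$ then reduces to checking $(a_1+1)^2\leq 3(2^{a_1}-1)$ for $a_1\geq 2$, a one-variable inequality that holds with equality at $a_1=2$ and is easily propagated by induction on $a_1$.

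For the inductive step with $n\geq 2$, denote by $f,g$ the two sides of the displayed inequality for $S(a_1,\ldots,a_n)$, and by $f_i,g_i$ the analogous quantities for the two sub-snakes $S(a_1,\ldots,a_{n-1})$ (for $i=1$) and $S(a_1,\ldots,a_{n-1}-1)$ (for $i=2$), where the convention of Proposition~\ref{prop:bsnakes} replaces the latter by $S(a_1,\ldots,a_{n-2})$ when $a_{n-1}=2$ and $n>2$. By Proposition~\ref{prop:bsnakes}, $f=f_1+(a_n-1)f_2$, and by construction $g=g_1(2^{a_n}-1)$, while the ratio $r:=g_2/g_1$ equals either $(2^{a_{n-1}-1}-1)/(2^{a_{n-1}}-1)$ or $1/(2^{a_{n-1}}-1)$ in the two cases, and in either case satisfies $r\leq 1/2$. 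Assuming the inductive hypothesis applies to both sub-snakes, the elementary bound $(x+y)^2\leq 2(x^2+y^2)$ yields
$$f^2 \leq 2f_1^2 + 2(a_n-1)^2 f_2^2 \leq 6g_1 + 6(a_n-1)^2 g_2 \leq 3g_1\bigl(2+(a_n-1)^2\bigr),$$
so it only remains to verify the one-variable inequality $2+(a_n-1)^2\leq 2^{a_n}-1$ for $a_n\geq 2$ (with equality only at $a_n=2$).

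The main obstacle is that the second term of the recursion may collapse to the trivial snake $S(1)$, for which the inductive hypothesis fails ($T^2=4>3=3\prod(2^{a_i}-1)$). A careful case analysis of when $(a_1,\ldots,a_{n-1}-1)$ (or the replacement $(a_1,\ldots,a_{n-2})$) represents $S(1)$ shows that, under the normalization $a_1\geq 2$, this happens precisely when $n=2$ and $a_1=2$. I would handle this as a second base case by direct computation: here $f=3+(a_2-1)\cdot 2=2a_2+1$ and $g=3(2^{a_2}-1)$, and the resulting inequality $9(2^{a_2}-1)\geq(2a_2+1)^2$ for $a_2\geq 2$ is again established by a straightforward one-variable induction on $a_2$, with equality on the boundary at $a_2=2$. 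In all other configurations the analysis above applies and closes the induction.
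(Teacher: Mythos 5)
Your proof is correct and follows essentially the same route as the paper's: induction on $n$ via the basis recursion of Proposition~\ref{prop:bsnakes}, dualizing to normalize $a_1\ge 2$, and the same two base cases $S(a)$ and $S(2,a)$; the only difference is that you close the inductive step with $(x+y)^2\le 2(x^2+y^2)$ together with the ratio bound $g_2/g_1\le 1/2$, reducing to $2+(a_n-1)^2\le 2^{a_n}-1$, whereas the paper works with square roots and reduces to $1+\frac{b-1}{\sqrt{2}}\le (2^b-1)^{1/2}$ --- both estimates close for the same reason. One harmless slip: for $S(2,a_2)$ with $a_2=2$ your inequality $9(2^{a_2}-1)\ge (2a_2+1)^2$ reads $27\ge 25$, so equality does \emph{not} hold there; among non-trivial snakes equality in the proposition occurs only for $S(2)$ and its dual.
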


\begin{proof}
  Let $M=S(a_1,\ldots,a_n)$ be a non-trivial snake. We proceed by induction on $n$. If $n=1$, then $M=S(a)$ and since the snake is non-trivial we have $a\geq 2$. Now, $T(S(a);1,1)$ is the number of lattice paths in its diagram which is clearly $a+1$. By Equation~\eqref{eq:snakeprod}, we have to prove that
  
  \[
    4 \cdot (2^a-1)\geq \frac{4}{3}\cdot (a+1)^2.
  \]
  
  Since $a\geq 2$, we have $a^2\geq a+2$. Using the binomial formula we get
  
  \begin{align*}
    4\cdot ((1+1)^a-1)&\geq 4\cdot \left(1+a+\frac{a(a-1)}{2}-1\right)\\
     &=2a^2+2a= \frac{4}{3} \cdot a^2 + \frac{2}{3} \cdot a^2 +2a \geq \frac{4}{3} \cdot a^2 + \frac{2}{3} \cdot (a+2) +2a\\
		 &= \frac{4}{3} \cdot (a^2+2a+1)=\frac{4}{3} \cdot (a+1)^2.
  \end{align*}
  
	We need another induction base: the snakes $S(2,a)$. By using Equations \eqref{eq:snakeprod} and \eqref{eq:snakebases}, we need to prove that
	
	\[
		4 \cdot 3 \cdot (2^a-1)\geq \frac{4}{3} \cdot (2a+1)^2.
	\]
	
	Recall that $a\geq 2$. By using the binomial formula again we have
	
	\begin{align*}
	   4\cdot 3 \cdot (2^a-1)&\geq 12 \cdot \left(1+a+\frac{a(a-1)}{2}-1\right)\\
													&=6a^2+6a=\frac{4}{3} \cdot (4a^2+4a) + \frac{2}{3} (a^2 + a) \geq \frac{4}{3} \cdot (4a^2+4a+1)\\
													&=\frac{4}{3} \cdot (2a+1)^2.
	\end{align*}
	
  This proves our induction bases. We now suppose that the conclusion is true for $1,2,\ldots,n-1$ and we consider the snake $S(a_1,\ldots, a_{n-1},b)$ with $n,b\geq 2$ and if $n=2$, then $a_1\geq 3$. By using Equation \eqref{eq:snakerec}, we have that:
  
  \begin{align*}  
  T(S(a_1,\ldots,b),1,1)& =T(S(a_1,\ldots,a_{n-1}),1,1) \ +\\
       & \ \ \ \ (b-1)\cdot T(S(a_1,\ldots,a_{n-1}-1),1,1).
  \end{align*}
We may now use the induction hypothesis. Notice that $a_{n-1}-1$ may become $1$, but only if $n>2$. In this case we consider $S(a_1,\ldots,a_{n-2})$. Thus, we can always conclude that $T(S(a_1,\ldots,b),1,1)$ is less than or equal to

  \[
      \frac{\sqrt{3}}{2}\cdot 2\cdot \prod_{i=1}^{n-1} (2^{a_i}-1)^{1/2}+\frac{\sqrt{3}}{2}\cdot (b-1) \cdot 2\cdot (2^{a_{n-1}-1}-1)^{1/2}\cdot \prod_{i=1}^{n-2} (2^{a_i}-1)^{1/2}
  \]
  
  which can be factorized as
  
  \[
    \frac{\sqrt{3}}{2}\cdot 2\cdot \left( \prod_{i=1}^{n-2} (2^{a_i}-1)^{1/2}\right)\cdot \left((2^{a_{n-1}}-1)^{1/2}+(b-1)\cdot (2^{a_{n-1}-1}-1)^{1/2}\right).
  \]
  
  Therefore, to get the two extra factors that we need it will be enough to prove that for any $a_{n-1}\geq 2$ and $b\geq 2$ we have
  
  \[
    (2^{a_{n-1}}-1)^{1/2}+(b-1)\cdot (2^{a_{n-1}-1}-1)^{1/2} \leq (2^{a_{n-1}}-1)^{1/2}\cdot (2^{b}-1)^{1/2}.
  \]
	
	Dividing both sides by $(2^{a_{n-1}}-1)^{1/2}$ this becomes
	
	\[
		1+\frac{b-1}{\sqrt{2}}\cdot \left(1-\frac{1}{2^{a_{n-1}}-1}\right)^{1/2}\leq (2^{b}-1)^{1/2}.
	\]
	
	We will prove that for $b\geq 2$ the following stronger inequality holds
	
	\[
	  1+\frac{b-1}{\sqrt{2}}\leq (2^{b}-1)^{1/2}.
	\]
	
	By the binomial formula, $2^b\geq 1+b+\frac{b(b-1)}{2}$. Therefore,
	
	\[
		2^b-1 \geq \frac{b^2+b}{2}\geq \frac{b^2}{2}+\left(\sqrt{2}-1\right)b+ \frac{3}{2}-\sqrt{2}=\left(1+\frac{b-1}{\sqrt{2}}\right)^2.
	\]
	
	This proves the desired inequality and thus the proposition follows by induction.
	
\end{proof}

\begin{prop}\label{prop:LPdown} Let $M$ be a connected LPM. Then either $M$ is a snake or $M$ has an element $e$ such that both $M\setminus e$ and $M/e$ are connected LPMs different from the trivial snake.
\end{prop}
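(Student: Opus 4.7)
The plan is to carry out a direct geometric analysis on the diagram of $M=M[P,Q]$, exploiting two ingredients. First, the class of LPMs is minor-closed, so $M\setminus e$ and $M/e$ are always LPMs and we need not worry about that requirement. Second, $M[P,Q]$ is connected precisely when $P$ and $Q$ meet only at $(0,0)$ and $(m,r)$, and it is a snake precisely when in addition the diagram contains no interior lattice point; hence the task reduces to locating a single step $e$ at which the diagrams of $M\setminus e$ and $M/e$ both retain the two-meeting-points property and have more than two elements.

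The first step is to make precise the diagrammatic effect of deletion and contraction on an LPM, starting from the interval presentation $(A_1,\ldots,A_r)$. Deletion $M\setminus e$ forces step $e$ to be an East move, which geometrically amounts to lifting the lower path $P$ by one square at step $e$; dually, using the closure of LPMs under duality already exploited in this section, contraction $M/e$ lowers the upper path $Q$ by one square at step $e$. In both cases the result is the diagram of an LPM on $|E(M)|-1$ elements, and its connectedness is equivalent to the lifted (respectively lowered) path still meeting the other only at the two global endpoints.

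Assume now that $M$ is connected and not a snake. Then the diagram contains some interior lattice point $(x_0,y_0)$, which in particular forces a vertical gap of at least two units between $P$ and $Q$ at step $e_0 = x_0+y_0$. Choosing $e$ to be this step, or an adjacent step that sits in the wide part of the diagram, both ``$P$ lifted by one'' and ``$Q$ lowered by one'' remain strictly separated from the opposing path except at the global endpoints, so both $M\setminus e$ and $M/e$ are connected. Finally, a connected LPM on at most three elements is itself a snake (either $S(1)$ on two elements or $S(2)$ on three elements), so the non-snake hypothesis forces $|E(M)|\geq 4$; consequently $M\setminus e$ and $M/e$ each have at least three elements and cannot be the trivial snake $S(1)$.

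The main obstacle is the simultaneity: the same $e$ must work for both lifting $P$ and lowering $Q$, which can be delicate in configurations where the interior region of the diagram is thin and essentially one-dimensional. A clean way to handle this is by contrapositive: if no $e$ satisfies both conditions simultaneously, then at every step the two paths are within one unit of each other on both sides, which forces the diagram to have width one throughout, meaning $M$ is a snake and contradicting the hypothesis.
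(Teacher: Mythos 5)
Your overall plan follows the same idea as the paper's proof -- locate an interior lattice point of the diagram and take the step $e$ through (or next to) it, handling deletion via duality with contraction -- but the central claim is asserted rather than proved, and the assertion rests on a description of LPM minors that is not correct as stated. Contracting $e$ does not simply ``lower the upper path $Q$ by one square at step $e$'': the minor has one fewer element, so its diagram lives in a smaller grid and \emph{both} bounding paths change (concretely, the diagram of $M/e$ is swept out by the lattice paths of $M$ whose $e$-th step is North, with that step collapsed; the new upper and lower paths are the highest and lowest such paths, each losing one North step, and the position of the lost step is determined globally, not locally at $e$). Once the description is repaired, the statement ``both modified paths remain strictly separated from the opposing path except at the global endpoints'' is exactly the content of the proposition and still requires an argument; your closing ``contrapositive'' paragraph only restates the desired conclusion (``otherwise the diagram has width one throughout'') without deriving it from a precise criterion. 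The phrase ``this step, or an adjacent step that sits in the wide part of the diagram'' also signals that you have not pinned down which step actually works.

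The paper closes this gap by invoking a known structural result (Corollary~2.17 of the excluded-minors paper of Bonin): for a connected LPM and an element $e$ that is neither the first nor the last, $M/e$ is connected if and only if $e$ lies in at least two sets $A_j$ of the transversal presentation. Taking $p=(x,y)$ to be the highest and rightmost interior lattice point and $e=x+y+1$, the four unit cells meeting $p$ all lie in the diagram, which forces $e\in A_y\cap A_{y+1}$; connectivity of $M\setminus e$ then follows by applying the same argument to the dual diagram. If you want a self-contained proof along your lines, you would need to prove an analogue of that criterion from the corrected diagram description of $M/e$. On the positive side, your final observation -- that a connected non-snake LPM has at least four elements, so the minors have at least three and cannot be the trivial snake -- is correct and in fact supplies a detail the paper leaves implicit.
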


\begin{proof} Suppose that $M=M[P,Q]$ is a connected LPM that is not a snake. Let us consider the interior lattice point of $M$ that is highest and rightmost, say $p=(x,y)$. We claim that $e=x+y+1$ is the desired element of $M$, see Figure~\ref{fig:intpoint}. 
Indeed,~\cite[Corollary 2.17]{Bon-10} states that for any element $e$ of a connected LPM that is not the first or the last, the contraction $M/e$ is connected if and only if $e$ is in at least two sets in the presentation as a transversal matroid. Since this is the case for the above $e$, $M/e$ is connected. The connectivity of $M\setminus e$ follows by duality. 

%
%
%

\begin{figure}[ht] 
  \centering
  \includegraphics[width=.7\textwidth]{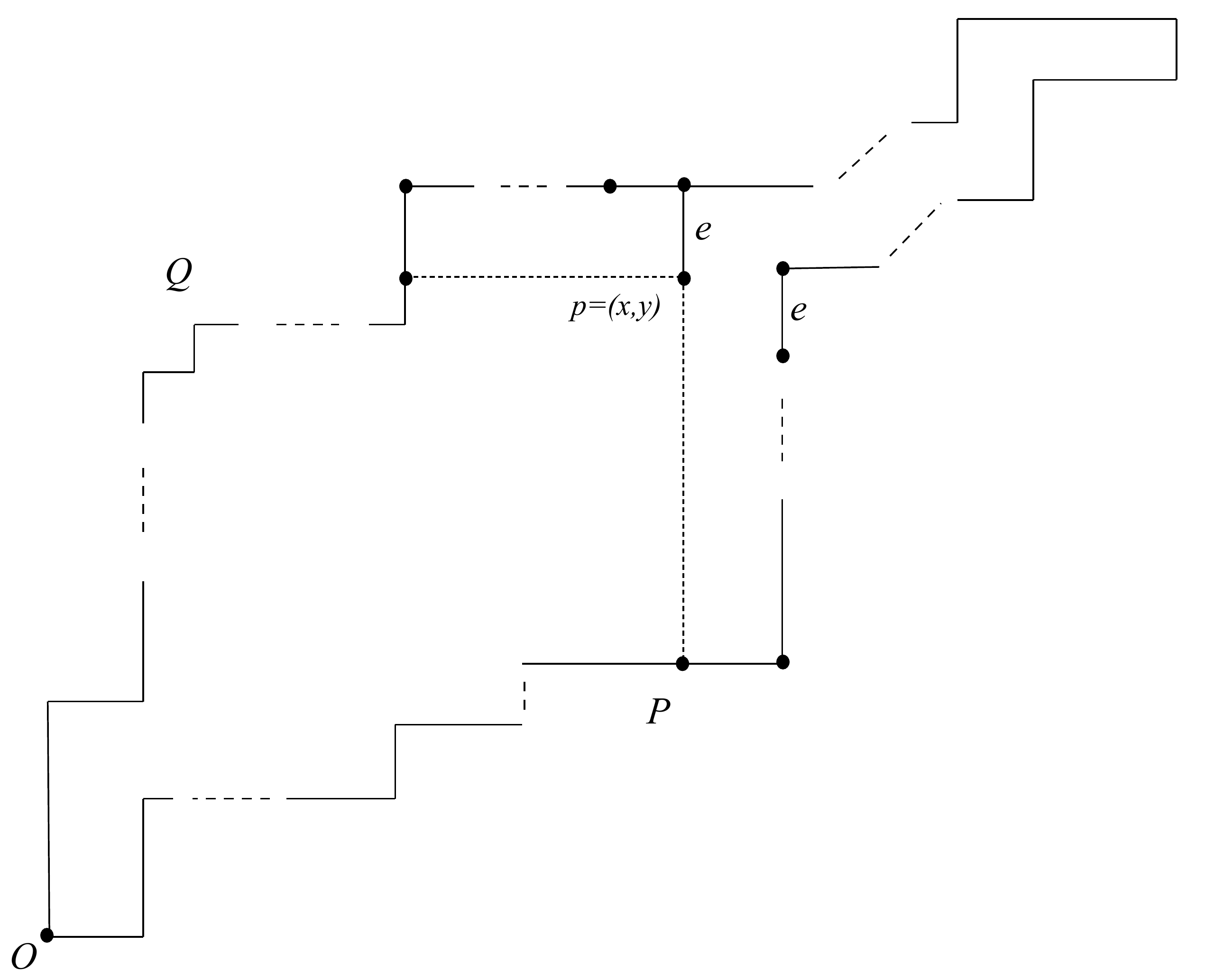}
  \caption{An LPM with an interior point $p$.}
  \label{fig:intpoint}
\end{figure}

\end{proof}

The following result is valid for general matroids. A version without the $\frac{4}{3}$ factor  is given in~\cite{Jackson2010}; see also~\cite[Lemma 2.2]{Nob-14}. The following proof is slightly different, and we include it for completeness.

\begin{lem}
  \label{lem:delcon}
	Let $M$ be a loopless and coloopless matroid and let $e$ be an element of its ground set. Suppose that the inequality in Theorem~\ref{thm:MWLPM} holds for $M\setminus e$ and for $M/e$. Then, the inequality also holds for $M$.
\end{lem}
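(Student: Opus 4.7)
The plan is to apply the Tutte polynomial deletion-contraction recursion at each of the three evaluation points $(2,0)$, $(0,2)$, $(1,1)$ and then conclude by a straightforward AM-GM argument. Since $M$ is loopless and coloopless, the element $e$ is neither a loop nor a coloop, so the recursion from Section~\ref{sec:basic} gives
\[
T(M;x,y) = T(M\setminus e;x,y) + T(M/e;x,y)
\]
for any $(x,y)$. In particular, setting $\alpha = T(M\setminus e;2,0)$, $\beta = T(M\setminus e;0,2)$, $\tau = T(M\setminus e;1,1)$ and $\alpha'=T(M/e;2,0)$, $\beta'=T(M/e;0,2)$, $\tau'=T(M/e;1,1)$, the hypothesis reads $\alpha\beta \geq \tfrac{4}{3}\tau^2$ and $\alpha'\beta' \geq \tfrac{4}{3}{\tau'}^2$, and the desired inequality becomes
\[
(\alpha+\alpha')(\beta+\beta') \geq \tfrac{4}{3}(\tau+\tau')^2.
\]

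Next I would expand the left-hand side as $\alpha\beta + \alpha'\beta' + (\alpha\beta' + \alpha'\beta)$ and bound the two ``diagonal'' terms by the inductive hypothesis. For the two ``cross'' terms, the arithmetic-geometric mean inequality gives
\[
\alpha\beta' + \alpha'\beta \;\geq\; 2\sqrt{\alpha\beta\cdot\alpha'\beta'} \;\geq\; 2\sqrt{\tfrac{4}{3}\tau^2 \cdot \tfrac{4}{3}{\tau'}^2} \;=\; \tfrac{8}{3}\,\tau\tau'.
\]
Adding this to the two inductive bounds assembles the right-hand side exactly:
\[
(\alpha+\alpha')(\beta+\beta') \;\geq\; \tfrac{4}{3}\tau^2 + \tfrac{8}{3}\tau\tau' + \tfrac{4}{3}{\tau'}^2 \;=\; \tfrac{4}{3}(\tau+\tau')^2,
\]
which is precisely $\tfrac{4}{3}\,T(M;1,1)^2$.

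There is essentially no hard step; the only thing to check is that the AM-GM step is legitimate, i.e., that $\alpha,\beta,\alpha',\beta'$ are all non-negative. This follows from the general fact that the Tutte polynomial of any matroid takes non-negative values at $(2,0)$, $(0,2)$, and $(1,1)$ (these counting broken-circuit-type objects, or equivalently being given by the sum-over-subsets formula with non-negative coefficients since $r(E)-r(A)\geq 0$ and $|A|-r(A)\geq 0$). Hence the argument is a clean two-line combination of the deletion-contraction recursion and AM-GM, and this is exactly what I would write up.
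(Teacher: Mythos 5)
Your argument is correct and is essentially the paper's proof: the paper applies deletion--contraction at the three points and then invokes the Cauchy--Schwarz inequality $(p+s)(q+t)\geq(\sqrt{pq}+\sqrt{st})^2$, which is exactly your expand-and-AM-GM step in disguise. One small quibble: your parenthetical justification of non-negativity via the sum-over-subsets formula is off (at $(2,0)$ the factor $(y-1)^{|A|-r(A)}=(-1)^{|A|-r(A)}$ alternates in sign), but the needed fact is standard since the Tutte polynomial has non-negative coefficients and is evaluated at non-negative arguments.
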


\begin{proof}
  We define $p,q,r,s,t,u$ as follows:
	
	\begin{align*}
	  p&=T(M\setminus e;2,0),\; q=T(M\setminus e;0,2),\; r=T(M\setminus e;1,1),\\
		s&=T(M/e;2,0),\; t=T(M/e;0,2),\; u=T(M/e;1,1).
	\end{align*}
	
	Since $M$ is loopless and coloopless, we have that $T(M;x,y)=T(M\setminus e;x,y)+T(M/e;x,y)$. Therefore, we have to prove that
	
	\[
		(p+s)(q+t)\geq \frac{4}{3} \cdot  	(r+u)^2.
	\]
	
	By hypothesis, we know that $p\cdot q \geq \frac{4}{3} \cdot r^2$ and that $s\cdot t \geq \frac{4}{3} \cdot u^2$. Combining this and the Cauchy-Schwartz inequality we conclude as follows:
	
	\[
		(p+s)(q+t)\geq \left(\sqrt{pq}+\sqrt{st}\right)^2\geq \frac{4}{3} \cdot (r+u)^2.
	\]
	
\end{proof}
Notice that there is nothing special about $4/3$ in the lemma above in the sense that if $p\cdot q \geq k \cdot r^2$ and $s\cdot t \geq k\cdot u^2$ then $(p+s)(q+t)\ge k(r+u)^2$. The value $k=4/3$ is the one that gives equality for the snake $S(2)$.
\smallskip

We are now ready to prove our main result.

\begin{proof}[Proof of Theorem~\ref{thm:MWLPM}]

  First we prove the theorem for connected LPMs. In this proof we will only refer to LPMs different from the trivial snake. We proceed by induction on the number of elements. If the matroid has three elements, then a connected LPM with 3 elements is either $S(2)$ or its dual, for which we know that the theorem is true.
\smallskip
	
	Now suppose that the theorem is true for connected LPMs of less than $n$ elements. Let $M$ be a connected LPM with $n$ elements. If $M$ is a snake, then by Proposition~\ref{prop:WMsnakes} the inequality holds. Otherwise, by Proposition~\ref{prop:LPdown} we can find an element $e$ such that both $M\setminus e$ and $M/e$ are connected LPMs. Each of these has fewer elements than $M$, and thus by the inductive hypothesis the inequality holds for both of them. Therefore using Lemma~\ref{lem:delcon} we conclude that the inequality also holds for $M$. This completes the proof for connected LPMs.
	
	We are left with the case in which $M$ is LC but not connected. In this case we express $M$ as direct sum of connected LPMs $M_1$, $M_2$, $\ldots$, $M_n$. By our assumption at least one of them, say $M_1$, is not the trivial snake. For each $1\leq i\leq n$ let
	
	\[
		p_i=T(M_i;2,0),\; q_i=T(M_i;0,2),\; r_i=T(M_i;1,1).
	\]
	
We know that $p_1\cdot q_1\geq \frac{4}{3} \cdot r_1^2$ and that for each $2\leq i\leq n$ we have $p_i\cdot q_i\geq r_i ^2$. Using the fact that the Tutte polynomial of a direct sum is the product of the Tutte polynomials of the components we get:
	
	\begin{align*}
		T(M;2,0)\cdot T(M;0,2) &=\prod_{i=1}^n p_i \cdot \prod_{i=1}^n q_i = \prod_{i=1}^n (p_i\cdot q_i)\\
				      &\geq \frac{4}{3}\cdot \prod_{i=1}^n r_i^2 = \frac{4}{3}\cdot\left(\prod_{i=1} r_i\right)^2 = \frac{4}{3}\cdot T(M;1,1)^2.
	\end{align*}

  Therefore, the inequality is true for every LC LPM that is not a direct sum of trivial snakes.
	
\end{proof}

Theorem~\ref{thm:MWLPM} immediately yields the following corollary which confirms the multiplicative Merino-Welsh conjecture for LPMs.

\begin{cor}\label{cor:MWLPM}

Let $M$ be a lattice path matroid with no loops and no coloops. Then we have
      $$T(M;2,0)\cdot T(M;0,2)\geq T(M;1,1)^2$$	
and equality holds if and only if $M$ is a direct sum of trivial snakes.

\end{cor}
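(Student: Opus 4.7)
The plan is to deduce this corollary directly from Theorem~\ref{thm:MWLPM} by a short case split, supplemented by an explicit computation for the trivial snake. There is no new inductive argument required, so the proof will be a few lines.

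First I would handle the strict inequality part. Suppose that $M$ is an LC LPM that is \emph{not} a direct sum of trivial snakes. Then Theorem~\ref{thm:MWLPM} applies and yields
\[
  T(M;2,0)\cdot T(M;0,2)\;\ge\;\tfrac{4}{3}\,T(M;1,1)^2.
\]
Since $T(M;1,1)$ is the number of bases of $M$, it is a positive integer, and as $\tfrac{4}{3}>1$ we obtain $T(M;2,0)\cdot T(M;0,2)>T(M;1,1)^2$. In particular the inequality of the corollary holds, and strictly, so equality is impossible in this case.

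Next I would verify equality for direct sums of trivial snakes. The trivial snake $S(1)$ is the LPM on a pair of parallel elements, i.e., the uniform matroid $U_{1,2}$, and a direct application of the deletion/contraction recursion recalled in Section~\ref{sec:basic} gives $T(S(1);x,y)=x+y$. Evaluating this at the three points of interest yields
\[
  T(S(1);2,0)=T(S(1);0,2)=T(S(1);1,1)=2,
\]
so equality holds for the trivial snake. Using the multiplicativity of the Tutte polynomial over direct sums, if $M$ is the direct sum of $k$ trivial snakes then
\[
  T(M;2,0)\cdot T(M;0,2)=2^k\cdot 2^k=(2^k)^2=T(M;1,1)^2,
\]
so equality holds in the corollary as well.

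Combining the two observations completes the proof: the inequality is always valid for LC LPMs, and equality occurs if and only if $M$ is a direct sum of trivial snakes. There is no real obstacle here, since the hard work is entirely contained in Theorem~\ref{thm:MWLPM}; the corollary is just a repackaging of that theorem together with the explicit boundary computation for $S(1)$.
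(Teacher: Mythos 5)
Your proposal is correct and matches the paper's intent: the paper states the corollary follows immediately from Theorem~\ref{thm:MWLPM}, and your argument is exactly that deduction, made explicit via the strictness of the $\tfrac{4}{3}$ factor (using $T(M;1,1)\geq 1$) together with the direct computation $T(S(1);x,y)=x+y$ and multiplicativity over direct sums.
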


%





\bibliographystyle{plain}
\def\cprime{$'$}


\end{document}